\title{ \Large  Convergence and non-convergence to 
Bose-Einstein condensation}
\author{ Shuzhe Cai\footnote{Beijing International Center for Mathematical Research, 
Peking University, Beijing 100871, China;  e-mail address: 2206392386@pku.edu.cn}\,\,
and  Xuguang Lu\footnote{Department of Mathematical Sciences,
Tsinghua University, Beijing 100084, China;  e-mail address: xglu@math.tsinghua.edu.cn
} }
\date{}  
\newcommand{\ld}{\lambda}
\newcommand{\p}{\partial}
\newcommand{\vp}{\varphi}
\newcommand{\vep}{\varepsilon}
\newcommand{\og}{\omega}
\newcommand{\sg}{\sigma}
\newcommand{\Gm}{\Gamma}
\newcommand{\dt}{\delta}
\newcommand{\Dt}{\Delta}
\newcommand{\fr}{\frac}
\newcommand{\wh}{\widehat}
\newcommand{\bR}{{\mathbb R}^3 }
\newcommand{\bS}{{\mathbb S}^2 }
\newcommand{\bRR}{{\bR}\times{\bR}}
\newcommand{\bRS}{{\bR}\times {\mathbb S}^2 }
\newcommand{\bRRS}{{\bRR}\times{\mathbb S}^2 }
\newcommand{\la}{\langle}
\newcommand{\ra}{\rangle}
\newcommand{\mR}{{\mathbb R}}
\newcommand{\mN}{{\mathbb N}}
\newcommand{\be}{\begin{myequation}}
\newcommand{\ee}{\end{myequation}}
\newcommand{\bes}{\begin{myeqnarray}}
\newcommand{\ees}{\end{myeqnarray}}
\newcommand{\beas}{\begin{eqnarray*}}
\newcommand{\eeas}{\end{eqnarray*}}
\newcommand{\lb}{\label}
\newcounter{thm}
\newtheorem{theorem}{Theorem}[section]
\newtheorem{definition}[theorem]{Definition}
\newtheorem{lemma}[theorem]{Lemma}
\newtheorem{remark}[theorem]{Remark}
\newtheorem{assumption}[theorem]{Assumption}
\newcounter{myequation}[section]
\newenvironment{proof}{{\bf Proof.}}{$\hfill\Box$}
\newenvironment{myequation}{\stepcounter{myequation}\begin{equation}}{\end{equation}}
\newenvironment{myeqnarray}{\stepcounter{myequation}\begin{eqnarray}}{\end{eqnarray}}
\newcommand{\dnumber}{\stepcounter{myequation}}
\begin{document}
\maketitle
\vskip 0.1in \baselineskip 18.2pt
\begin{abstract}
The paper is a continuation of our previous work on the strong convergence to equilibrium 
for the spatially homogeneous
Boltzmann equation for Bose-Einstein particles for isotropic solutions at low temperature.
Here we study the influence of the particle interaction potentials on the convergence to Bose-Einstein condensation (BEC).
Consider two cases of certain potentials
that are such that the corresponding scattering cross sections are bounded and 1) have a lower bound
${\rm const.}\min\{1, |{\bf v-v}_*|^{2\eta}\}$ with ${\rm const}.>0, 0\le \eta<1$, 
and 2) have an upper bound ${\rm const.}\min\{1, |{\bf v-v}_*|^{2\eta}\}$ with $\eta\ge 1$.
For the first case, the long time convergence to BEC i.e.
$\lim\limits_{t\to\infty}F_t(\{0\})=F_{\rm be}(\{0\})$ is proved for a class of initial data 
having very low temperature and thus it holds the strong convergence 
to equilibrium.
For the second case we show that if initially $F_0(\{0\})=0$, then 
$ F_t(\{0\})=0$ for all $t\ge 0$ and thus there is no convergence to BEC hence no strong convergence 
to equilibrium.

{\bf Key words}: Bose-Einstein particles, strong convergence, equilibrium,
low temperature, condensation.

\end{abstract}

\begin{center}\section { Introduction}\end{center}

We study the space homogeneous quantum Boltzmann equations for Bose-Einstein particles which (after normalizing some physics constants) is given by 
 \be\fr{\partial}{\partial t}f({\bf v},t)=\int_{{\bRS}}B({\bf
{\bf v-v}_*},\og)\big(f'f_*'(1+f)(1+f_*)-ff_*(1+f')(1+f_*')\big) {\rm d}\omega{\rm
d}{\bf v_*}\label{Equation1}\ee
with $({\bf v}, t)\in{\mathbb R}^3\times(0,\infty)$, where the solution $f=f({\bf v},t)\ge 0$ is the  number density of particles at time $t$ with the velocity ${\bf v}$; 
$f_*=f({\bf v_*},t), f'=f({\bf v'},t),f_*'=f({\bf v_*'},t)$, and ${\bf v},{\bf v_*}$ and ${\bf v'},{\bf v_*'}$ are velocities of two particles before and after
their collision:
\be{\bf v}'={\bf v}- (({\bf v}-{\bf v}_*)\cdot\omega)\omega,\quad {\bf
v}_*'={\bf v}_*+ (({\bf v}-{\bf v}_*)\cdot\omega)\omega, \qquad \omega\in{\mathbb S}^2.
\lb{colli}\ee
Eq.(\ref{Equation1}) as well as that for Fermi-Dirac particles were first derived by Nordheim \cite{Nordheim} and Uehling $\&$ Uhlenbeck \cite{Uehling and Uhlenbeck} and then developed by \cite{weak-coupling},\cite{Chapman and Cowling},\cite{Do},\cite{ESY},\cite{EMV0},\cite{Lions},\cite{Lu2000},\cite{LS}. 
A physical explanation for deriving Eq.(\ref{Equation1}) in  Chp.17 of \cite{Chapman and Cowling} is as follows: 
when the mean distance between neighbouring molecules is comparable with the size of the quantum wave fields
with which molecules are surrounded, a state of congestion is formed. For a gas
composed of Bose-Einstein particles, according to quantum theory,
the presence of a like particle in the velocity-range ${\rm d}{\bf v}$ increases the
probability that a particle will enter that range; the presence of $f({\bf v}){\rm d}{\bf v}$
particles per unit volume increases this probability in the ratio $1+
f({\bf v})$. This yields Eq.(\ref{Equation1}). 
 Symmetrically, replacing the `` increases " and $``1+f({\bf v})" $ with
``decreases" and  $``1- f({\bf v})"$ (Pauli exclusion principle) leads to the Boltzmann
equation for Fermi-Dirac particles.
We are interested in the quantum effect of the solutions of Eq.(\ref{Equation1}). Recent research have shown that 
Eq.(\ref{Equation1}) is a suitable mesoscopic model for describing the evolution of Bose-Einstein condensation (BEC), see e.g.
\cite{AN1},\cite{CL},\cite{EV1},\cite{EV2},\cite{Lu2004},\cite{Lu2016},\cite{Nouri},\cite{SH}.
Here we show further that the interaction potential has a significant impact on the occurrence and convergence of BEC.

The function $B({\bf {\bf v-v}_*},\omega)$ in (\ref{Equation1}) is the collision kernel which we assume takes the
form
 \be B({\bf {\bf v-v}_*},\omega)= \fr{1}{(4\pi)^2}|({\bf v-v}_*)\cdot\omega|
\Phi(|{\bf v}-{\bf v}'|, |{\bf {\bf v}-{\bf v}_*'}|)\lb{kernel}\ee where
\be 0\le \Phi\in C_b({\mR}_{\ge 0}^2),\quad  \Phi(r,\rho)=\Phi(\rho, r)\quad \forall\, (r,\rho)\in{\mR}_{\ge 0}^2 \lb{Phi}.\ee
According to \cite{weak-coupling} and \cite{ESY},  in the weak-coupling regime the function $\Phi$ in (\ref{kernel}) is
given by (after normalizing physical parameters)
\be \Phi(r,\rho)=\big(\widehat{\phi}(r)+\widehat{\phi}(\rho)\big)^2,\quad r,\rho\ge 0\label{kernel2}\ee
where $\widehat{\phi}$ is the (generalized) Fourier transform of a radially symmetric particle interaction potential
$ \phi(|{\bf x}|)\in {\mR}$:
$$\widehat{\phi}(r):=
\int_{{\bR}}\phi(|{\bf x}|) e^{-{\rm i}\xi\cdot{\bf x}}{\rm d}{\bf x} \Big|_{|\xi|=r}.$$
In particular if $\phi(|{\bf x}|)=\frac{1}{2}\delta({\bf x})$, where $\delta({\bf x})$ is the three dimensional Dirac delta function concentrating at
${\bf x}=0$, then  $\widehat{\phi}\equiv \fr{1}{2}$ hence $\Phi\equiv 1$ and (\ref{kernel}) becomes the hard sphere model:
\be B({\bf
{\bf v-v}_*},\omega)=\frac{1}{(4\pi)^2}|({\bf v-v}_*)\cdot\omega|.\lb{Hard}\ee 
In view of physics, the hard sphere model
(\ref{Hard})
can be extended to a more practical case where the interaction potential $\phi({\bf x})$ contains an attractive  term $\fr{-1}{2}U(|{\bf x}|)$, i.e.
$\phi(|{\bf x}|)=\fr{1}{2}(\delta({\bf x})-U(|{\bf x}|))$ where $U(|{\bf x}|)\ge 0$ on ${\bR}$.
The generalized Fourier transform of $\phi$ is
$\widehat{\phi}(r)=\fr{1}{2}\big(1-\widehat{U}(\xi)|_{|\xi|=r}\big)$.
For any $0<\eta<1$, a nonnegative potential $U(|{\bf x}|)$ satisfying 
$\widehat{U}(\xi)|_{|\xi|=r}= \fr{1}{1+r^{\eta}}$
can be constructed (see e.g. Appendix in \cite{CL}); in this case we have
\be \widehat{\phi}(r)= \fr{1}{2}\cdot\fr{r^{\eta}}{1+r^{\eta}}
,\quad r\ge 0.\lb{phi}\ee
For the case $\eta\ge 1$, for instance $\eta=2$, one may take $U$ to be the Yukawa potential $U(|{\bf x}|)=\fr{1}{4\pi |{\bf x}|}e^{-|{\bf x}|}$ so that 
$\widehat{U}(\xi)|_{|\xi|=r}= \fr{1}{1+r^{2}}$ and so the generalized Fourier transform of $\phi(|{\bf x}|)=\fr{1}{2}(\delta({\bf x})-U(|{\bf x}|))$  is 
\be \widehat{\phi}(r)=\fr{1}{2}\cdot \fr{r^2}{1+r^2},\quad r\ge 0.\lb{Yukawa}\ee

Due to the strong nonlinear structure  of the collision integrals in Eq.(\ref{Equation1}) 
and the effect of condensation,
existence and uniqueness of solutions of Eq.(\ref{Equation1}) for anisotropic initial data
have been so far only proven for finite time interval without smallness assumption on the initial data
\cite{Briant-Einav}
and  for global time interval with a smallness assumption on the initial data \cite{LiLu}, 
and for the case where initial data are closed to equilibrium \cite{AN2}, \cite{OW}, \cite{Zhou}.

For global in time solutions with general initial data at low temperature, so far we could only study 
the following weak form of Eq.(\ref{Equation1})  for isotropic solutions:
\bes&&\fr{{\rm d}}{{\rm d}t}\int_{{\bR}}\vp(|{\bf v}|^2/2)f(|{\bf v}|^2/2,t){\rm d}{\bf v}\nonumber\\
&&
=\fr{1}{2}\int_{{\bRRS}}(\vp+\vp_*-\vp'-\vp'_*)B({\bf {\bf v-v}_*},\omega)f'f_*'{\rm d}{\bf v}{\rm d}{\bf v}_*
{\rm d}\og  \qquad \nonumber\\
&&+\int_{{\bRRS}}(\vp+\vp_*-\vp'-\vp'_*)B({\bf {\bf v-v}_*},\omega)ff'f_*'{\rm d}{\bf v}{\rm d}{\bf v}_*
{\rm d}\og  \qquad \label{weak}\ees
for all test functions $\vp$ and all $t\in [0,\infty)$. Here the common
quartic terms $f'f_*'ff_*,\, ff_*f'f_*'$ cancel each other. By changing variables
$x=|{\bf v}|^2/2, y=|{\bf v}'|^2/2, z=|{\bf v}_*'|^2/2$ and
letting the measure $F_t\in {\cal B}_1^{+}({\mR}_{\ge 0})$ be defined by ${\rm d}F_t(x)=f(x,t)\sqrt{x}{\rm d}x$,
 the collision integrals in (\ref{weak}) can be rewritten
(see\cite{CL})
$$
\fr{1}{2}\int_{{\bRRS}}(\vp+\vp_*-\vp'-\vp'_*)Bf'f_*'
{\rm d}{\bf v}{\rm d}{\bf v}_*
{\rm d}\og=4\pi\sqrt{2}\int_{{\mathbb R}_{\ge 0}^2}{\cal J}[\varphi]{\rm d}F_t(y){\rm d}F_t(z),$$
$$
\int_{{\bRRS}}(\vp+\vp_*-\vp'-\vp'_*)B ff'f_*'
{\rm d}{\bf v}{\rm d}{\bf v}_*
{\rm d}\og
=4\pi\sqrt{2}\int_{{\mathbb R}_{\ge 0}^3}{\cal K}[\varphi]{\rm d}F_t(x){\rm d}F_t(y){\rm d}F_t(z)$$
where $B=B({\bf {\bf v-v}_*},\omega)$ is given by (\ref{kernel}) with
(\ref{Phi}),
${\cal J}[\varphi],{\cal K}[\varphi]$ are linear operators of $\vp\in C_b^2({\mR}_{\ge 0})$ 
defined in (\ref{JKW})-(\ref{Y}) below. This allows the solutions of Eq.(\ref{Equation1}) to be positive Borel measures
so that the evolution of condensation for low temperature can be investigated. 
The measure-valued isotropic solutions of Eq.(\ref{Equation1}) in the weak form is defined as follows (\cite{CL}):
\vskip2mm

\begin{definition}\label{definition1.1} Let $B({\bf {\bf v-v}_*},\omega)$ be given by (\ref{kernel}), (\ref{Phi}).
Let $F_0\in {\cal B}_{1}^{+}({\mathbb R}_{\ge 0})$. We say that a
family $\{F_t\}_{t\ge 0}\subset {\cal B}_{1}^{+}({\mathbb R}_{\ge 0})$, or simply $F_t$, is a conservative  measure-valued isotropic solution of Eq.(\ref{Equation1}) on the time-interval $[0, \infty)$ with the initial datum $F_t|_{t=0}=F_0$ if

{\rm(i)} $N(F_t)=N(F_0),\,\, E(F_t)=E(F_0)$ for all $t\in [0, \infty)$,

{\rm(ii)} for every $\varphi\in C^2_b({\mathbb R}_{\ge 0})$,  $t\mapsto \int_{{\mathbb R}_{\ge 0}}\varphi(x){\rm d}F_t(x)$ belongs to
$C^1([0, \infty))$, and

{\rm(iii)} for every $\varphi\in C^2_b({\mathbb R}_{\ge 0})$
\be\frac{{\rm d}}{{\rm d}t}\int_{{\mathbb R}_{\ge 0}}\varphi{\rm
d}F_t= \int_{{\mathbb R}_{\ge 0}^2}{\cal J}[\varphi]{\rm d}^2F_t+ \int_{{\mathbb R}_{\ge 0}^3}{\cal K}[\varphi]{\rm d}^3F_t\qquad \forall\,t\in[0, \infty). \label{Equation3}\ee
Here ${\rm d}^2F={\rm d}F(y){\rm d}F(z), {\rm d}^3F={\rm d}F(x){\rm d}F(y){\rm d}F(z).$
\end{definition}
\vskip2mm

{\bf Notation.}
Let ${\cal B}_k({\mR}_{\ge 0})$ ($k\ge 0$) be the linear space
of signed real Borel measures $\mu$ on ${\mR}_{\ge 0}$ satisfying
$\int_{{\mR}_{\ge 0}}(1+|x|)^k{\rm d}|\mu|(x)<\infty$,  where
$|\mu|$ is the total variation of $\mu$. 
Define 
$$\|\mu\|=\|\mu\|_0,\quad \|\mu\|_k=\int_{{\mR}_{\ge 0}}(1+|x|)^k{\rm d}|\mu|(x).$$
Let
${\cal B}_k^{+}({\mR}_{\ge 0})=\{F \in {\cal B}_k({\mR}_{\ge 0})\,|\, F \ge 0\}.$ For the case $k=0$ we also denote
${\cal B}({\mR}_{\ge 0})={\cal B}_{0}({\mR}_{\ge 0}), {\cal B}^{+}({\mR}_{\ge 0})={\cal B}_{0}^{+}({\mR}_{\ge 0})$.

Let $C^k_b({\mathbb R}_{\ge 0})$ with $k\in {\mathbb N}$ be the class of bounded continuous functions on
${\mathbb R}_{\ge 0}$ having bounded continuous derivatives on ${\mathbb R}_{\ge 0}$ up to the order $k$. Let
$$
C^{1,1}_b({\mathbb R}_{\ge 0})=\Big\{ \varphi\in C^1_b({\mathbb R}_{\ge 0})\, \Big|\,
\,
\frac{{\rm d}}{{\rm d}x}\varphi\in {\rm Lip}({\mathbb R}_{\ge 0})\Big\}$$
where ${\rm Lip}({\mathbb R}_{\ge 0})$ is the class of functions
satisfying Lipschitz condition on ${\mathbb R}_{\ge 0}$. For any 
$\vp\in 
C^{1,1}_b({\mathbb R}_{\ge 0})$, let
\be{\cal J}[\varphi](y,z)=\frac{1}{2}
\int_{0}^{y+z}{\cal K}[\varphi](x,y,z)
\sqrt{x}{\rm d}x,\quad {\cal K}[\varphi](x, y,z)=W(x,y,z)\Delta\varphi(x,y,z)
,\lb{JKW}\ee
\be \Delta\varphi(x,y,z)=\varphi(x)+\varphi(x_*)-\varphi(y)-\varphi(z)=
(x-y)(x-z)
\int_{0}^1\!\!\!\int_{0}^{1}\vp''(\xi) {\rm d}s {\rm
d}t
\lb{diff}\ee 
$$\xi=y+z-x+t(x-y)+s(x-z),\quad x,y,z\ge 0,\, x_*=(y+z-x)_{+},$$
\be
W(x,y,z)=\fr{1}{4\pi\sqrt{xyz}}
\int_{|\sqrt{x}-\sqrt{y}|\vee |\sqrt{x_*}-\sqrt{z}|}
^{(\sqrt{x}+\sqrt{y})\wedge(\sqrt{x_*}+\sqrt{z})}{\rm d}s
\int_{0}^{2\pi}\Phi(\sqrt{2}s, \sqrt{2} Y_*){\rm d}\theta
\qquad {\rm if}\quad x_*xyz>0,\lb{W1}\ee
\be
 W(x,y,z)=\left\{\begin{array}
{ll}
 \displaystyle
\fr{1}{\sqrt{yz}}
\Phi(\sqrt{2y}, \sqrt{2z}\,)
\qquad\,\, \qquad\qquad{\rm if}\quad  x=0,\,y>0,\, z>0 \\ \\  \displaystyle
\fr{1}{\sqrt{xz}}
\Phi(\sqrt{2x}, \sqrt{2(z-x)}\,)
\qquad \qquad {\rm if}\quad  y=0,\, z>x>0
 \\ \\ \displaystyle
\fr{1}{\sqrt{xy}}
\Phi(\sqrt{2(y-x)}, \sqrt{2x}\,)
\qquad \quad \quad{\rm if}\quad  z=0,\, y>x>0
 \\ \\ \displaystyle
0\qquad \qquad \qquad \qquad \qquad \qquad \qquad \quad {\rm others}\end{array}\right.\lb{W2}\ee

\be Y_*=Y_*(x,y,z,s,\theta)=\left\{\begin{array}
{ll}\displaystyle
\bigg|\sqrt{\Big(z-\fr{(x-y+s^2)^2}{4s^2}\Big)_{+}
}
+e^{{\rm i}\theta}\sqrt{\Big(x-\fr{(x-y+s^2)^2}{4s^2}
\Big)_{+}}\,\bigg|\quad {\rm if}\quad s>0\\ \\ \displaystyle
0\qquad\qquad  {\rm if}\quad s=0
\end{array}\right.\lb{Y}\ee
where $\Phi(r,\rho)$ is given in  (\ref{Phi}),$(u)_{+}=\max\{u, 0\}$,
$a\vee b =\max\{a,b\},\, a\wedge b =\min\{a,b\},$
${\rm i}=\sqrt{-1}$.
Note that according to (\ref{W2}), if $x_*=0$ i.e. if $x\ge y+z$, then $W(x,y,x)=0$. 
It is easily checked that the following implication holds true:
\be \left\{\begin{array}
		{ll}\displaystyle s>0\,, x_*>0\,,
 |\sqrt{x}-\sqrt{y}|\vee |\sqrt{x_*}-\sqrt{z}| \le s\le (\sqrt{x}+\sqrt{y})\wedge (\sqrt{x_*}+\sqrt{z})
\\  \displaystyle
\\ \Longrightarrow\quad  x-\fr{(x-y+s^2)^2}{4s^2}\ge0,\quad   z-\fr{(x-y+s^2)^2}{4s^2}\ge 0.
	\end{array}\right.\lb{KK0}\ee
The transition from
(\ref{W1}) to (\ref{W2}) in defining $W$ is due to
 the identity (in case $x_*>0$)
\be (\sqrt{x}+\sqrt{y})\wedge (\sqrt{x_*}+\sqrt{z})-|\sqrt{x}-\sqrt{y}|\vee |\sqrt{x_*}-\sqrt{z}|
=2\min\{\sqrt{x},\sqrt{x_*},\sqrt{y},\sqrt{z}\}\lb{1.difference}\ee
 from which one sees also that if $\Phi(r,\rho)\equiv 1$  then $W(x,y,z)$ becomes the function corresponding to the hard sphere model:
\be W_H(x,y,z)=\fr{1}{\sqrt{xyz}}\min\{\sqrt{x},\sqrt{y},\sqrt{z},\sqrt{x_*}\}, \qquad  x_*xyz>0\lb{hard}\ee
and for other cases of $x,y,z\in {\mR}_{\ge 0}$, $W_H(x,y,z)$ is given by (\ref{W2}) with $\Phi\equiv 1$.

As has been proven in \cite{Lu2013} that the test function
space $C_b^2({\mR}_{\ge 0})$ in Definition \ref{definition1.1}  can be weaken to
$C^{1,1}_b({\mathbb R}_{\ge 0})$.  In fact, for the function $\Phi$
satisfying (\ref{Phi}), it is
not difficult to prove that for any $\vp\in C^{1,1}_b({\mathbb R}_{\ge 0})$
the functions $(y,z)\mapsto (1+\sqrt{y}+\sqrt{z})^{-1}{\cal J}[\vp](y,z), (x,y,z)\mapsto {\cal K}[\vp](x,y,z)$ belong to $C_b({\mR}_{\ge 0}^2)$ and $C_b({\mR}_{\ge 0}^3)$
respectively.
Thus there is no problem of integrability in the right hand side of Eq.(\ref{Equation3}).
 A typical example of $\vp\in C^{1,1}_b({\mathbb R}_{\ge 0})$ is $\vp_{\vep}(x)=[(1-x/\vep)_{+}]^2$ (with $\vep>0$) which is 
 very helpful in the study of convergence to BEC.

\vskip2mm

{\bf Kinetic Temperature.}   Let $F\in {\mathcal B}_{1}^{+}({\mathbb R}_{\ge 0})$, 
$$N=N(F)=\int_{{\mR}_{\ge 0}}{\rm d}F(x),\quad  E=E(F)=\int_{{\mR}_{\ge 0}}x{\rm d}F(x)$$ 
and suppose $N>0$. If $m$ is the mass of one particle,
then  $m4\pi\sqrt{2}N$, $m 4\pi\sqrt{2}E$ are total
mass and kinetic energy of the particle system per unite
space volume. Keeping in mind the constant $m4\pi\sqrt{2}$,
there will be no confusion if we also call $N$ and $E$ the
mass and energy of a particle system.
The kinetic temperature $\overline{T}$ and the kinetic
critical temperature $\overline{T}_c$ are defined by (see e.g.\cite{Lu2004} and references therein)
$$\overline{T}=\frac{2m}{3k_{\rm B}}
\frac{E}{N}
,\qquad\overline {T}_c=\frac{\zeta(5/2)}{(2\pi)^{1/3}[\zeta(3/2)]^{5/3}}
\frac{2m}{k_{\rm B}}N^{2/3} $$
where $k_{\rm B}$ is the Boltzmann constant, $\zeta(s)=\sum_{n=1}^{\infty} n^{-s}, s>1$. Temperature effects are often expressed by a function of the ratio
$$ \frac{\overline{T}}{\overline {T}_c}= \frac{(2\pi)^{1/3}[\zeta(3/2)]^{5/3}}{3\zeta(5/2)}
\frac{E}{N^{5/3}}= 2.2720\frac{E}{N^{5/3}}. $$

\vskip2mm

{\bf Regular-Singular Decomposition.}  According to measure theory (see e.g.\cite{Rudin}), every
finite positive Borel measure can be uniquely decomposed into the
regular part and then singular part with respect to the (weighted) Lebesgue measure. For instance
if $F\in {\cal B}_1^{+}({\mathbb R}_{\ge 0})$, then
there exist unique $0\le f\in L^1({\mathbb R}_{\ge 0},(1+x)\sqrt{x}{\rm d}x)$, $\nu\in{\cal B}_1^{+}({\mathbb R}_{\ge 0})$ and a Borel set $Z\subset {\mathbb R}_{\ge 0}$
such that
$${\rm d}F(x)=f(x)\sqrt{x}{\rm d}x+{\rm d}\nu(x),\quad mes(Z)=0,\quad \nu({\mR}_{\ge 0}\setminus Z)=0.$$
$f$ and $\nu$ are called the regular part and the singular part of $F$
respectively. $F$ is called  regular if $\nu=0$, and singular if 
$\nu\neq 0$ and $\int_{{\mathbb R}_{\ge 0}}f(x)\sqrt{x}\,{\rm d}x=0$. 
\vskip2mm

{\bf Bose-Einstein Distribution.}  According to Theorem 5 of \cite{Lu2004} and
its equivalent version proved in the Appendix of \cite{Lu2013} we know that
for any  $N>0$, $E>0$ the Bose-Einstein distribution $F_{{\rm be}}\in {\mathcal B}_1^{+}({\mathbb R}_{\ge 0})$ given by
\beas {\rm d}F_{\rm be}(x)=f_{\rm be}(x)\sqrt{x}{\rm d}x+\big(1-(\overline{T}/\overline{T}_c)^{3/5}\big)_{+}
N\dt(x){\rm d}x\eeas
is the unique equilibrium solution of Eq.(\ref{Equation3})
satisfying $N(F_{\rm be})=N, E(F_{\rm be})=E$,  where
\be f_{\rm be}(x)=
\left\{\begin{array}{ll}\displaystyle
\frac{1}{Ae^{x/\kappa}-1},\quad A>1,\,\,\qquad {\rm if}
\quad \overline{T}/\overline{T}_c>1,\\ \\ \displaystyle
\frac{1}{e^{x/\kappa}-1},\qquad A=1\,\,\,\qquad  {\rm if}\quad
\overline{T}/ \overline{T}_c\le 1
\end{array}\right.\label{1-E}\ee
$\dt(x)$ is the Dirac
delta function concentrated at $x=0$, i.e.
$\dt(x){\rm d}x={\rm d}\nu_0(x)$  where  $\nu_0$ is the Dirac measure
concentrated at $x=0$, and
functional relations of the coefficients $A=A(N,E)\ge 1, \kappa=\kappa(N,E)>0$
can be found in for instance Proposition 1 in \cite{Lu2005}. From (\ref{1-E}) one sees that
$\overline{T}/\overline{T}_c\ge 1 \, \Longrightarrow\,  {\rm d}F_{\rm be}(x)=f_{\rm be}(x)\sqrt{x}{\rm d}x$, and
$$\overline{T}/\overline{T}_c<1\, \Longleftrightarrow\, F_{{\rm be}}(\{0\})=\big(1-(\overline{T}/\overline{T}_c)^{3/5}\big)N>0.$$
The positive number $(1-(\overline{T}/\overline{T}_c)^{3/5} )N$
is called the Bose-Einstein Condensation (BEC) of the equilibrium state of Bose-Einstein particles at low temperature $\overline{T}<\overline{T}_c$.
\vskip2mm

{\bf Entropy.}  The entropy functional for Eq.(\ref{Equation1}) is
\be S(f)=\int_{{\bR}}\big((1+f({\bf v}))\log(1+f({\bf v}))
-f({\bf v})\log f({\bf v})\big){\rm d}{\bf v},\quad 0\le f\in L^1_2({\bR}).\lb{ent}\ee
where
$$ L^1_s({\bR})=\Big\{f\in L^1({\bR})\,\,\Big|\,\,
\|f\|_{L^1_s}:=\int_{{\bR}}\la {\bf v}\ra^s|f({\bf v})|{\rm d}{\bf v}<\infty\Big\},\quad \la {\bf v}\ra:=(1+|{\bf v}|^2)^{1/2}.$$
For functions $0\le f\in L^1({\mathbb R}_{\ge 0},(1+x)\sqrt{x}\,{\rm d}x)$, the entropy
$S(f)$ is defined by $S(f)=S(\bar{f})$
with $\bar{f}({\bf v}):=f(|{\bf v}|^2/2)$, i.e.
\be S(f)=4\pi\sqrt{2}\int_{{\mathbb R}_{\ge 0}}\big((1+f(x))\log(1+f(x))-f(x)\log f(x)\big)\sqrt{x}\,{\rm d}x.\lb{ent3}\ee
As in\cite{CL}, the entropy $S(F)$ of a measure
$F\in {\cal B}_1^{+}({\mR}_{\ge 0})$ can be defined by
\be S(F):=\sup_{\{f_n\}_{n=1}^{\infty}}\limsup_{n\to\infty}S(f_n)\lb{ent1}\ee
where $\{f_n\}_{n=1}^{\infty}\subset L^1({\mathbb R}_{\ge 0},(1+x)\sqrt{x}\,{\rm d}x)$ satisfying
\bes&& f_n\ge 0,\quad \sup_{n\ge 1}\int_{{\mR}\ge 0}(1+x)f_n(x)\sqrt{x}{\rm d}x<\infty;  \lb{1.20}\\
&&\lim_{n\to\infty}\int_{{\mR}_{\ge 0}}\vp(x)f_n(x)\sqrt{x}{\rm d}x
=\int_{{\mR}_{\ge 0}}\vp(x){\rm d}F(x)\qquad \forall\,\vp\in C_b({\mR}_{\ge 0}).\dnumber\lb{1.21}\ees
It has been proven (see Lemma 3.2 in \cite{CL}) that  if $0\le f\in L^1({\mathbb R}_{\ge 0},(1+x)\sqrt{x}\,{\rm d}x)$ is the regular part of $F\in B_1^{+}
({\mR}_{\ge 0})$, i.e. if ${\rm d}F(x)=f(x)\sqrt{x}\,{\rm d}x+{\rm d}\nu(x)$  with the singular part $\nu$,
then
\be S(F)=S(f)\lb{ent2}\ee
which indicates that 1) the singular part $\nu$ of $F$ has no contribution to the entropy $S(F)$,
2) $F$ is not singular if and only if $S(F) > 0.$

\noindent Although the entropy $S(F_t)$ does not provide any information about the singular part of $F_t$, the entropy difference $S(F_{\rm be})-S(F_t)$ does control the convergence to equilibrium in a semi-strong norm
(see (\ref{2.10})), and thus (together with the Assumption \ref{assp} below) the entropy does help to prove the convergence $\lim\limits_{t\to \infty}F_t(\{0\})=F_{\rm be}(\{0\})$ hence to prove the strong convergence $\lim\limits_{t\to \infty}\|F_t-F_{\rm be}\|_1=0$ (see Lemma \ref{lemma2.1} and the proof of Theorem \ref{theorem1.2}).
 \vskip2mm

 Our assumptions on the collision kernel (\ref{kernel}) are as follows that 
imply different behaviour of solutions of Eq.(\ref{Equation1}):
 \vskip2mm

\begin{assumption}\lb{assp}    $B({\bf {\bf v-v}_*},\omega)$ is given by
(\ref{kernel}),(\ref{Phi}) where $\Phi$ also satisfies
\bes&& \rho\mapsto \Phi(r,\rho)\,\,\,{\rm is\,\,non-decreasing\,\,on}\,\,{\mR}_{\ge 0}\qquad \forall\, r\in{\mR}_{\ge 0}, \lb{Phi*}\\
&& b_0\min\{1,\,(r^2+\rho^2)^{\eta}\}\le \Phi(r,\rho)\le 1\qquad \forall\,
(r,\rho)\in{\mR}^2_{\ge 0} \dnumber \lb {1.6}\ees
for some constants $0<b_0<1, \eta\ge 0.$
\end{assumption}
\vskip2mm

\begin{assumption}\lb{assp*}    $B({\bf {\bf v-v}_*},\omega)$ is given by
(\ref{kernel}),(\ref{Phi}) where $\Phi$ also satisfies 
$\Phi\in C_b^2({\mR}_{\ge 0})$ and
\be
0 \le \Phi(r,\rho)\le \min\{1,\,(r^2+\rho^2)^{\eta}\}\qquad \forall\,
(r,\rho)\in{\mR}^2_{\ge 0} \lb {1.6*}\ee
for some constant $\eta\ge 0$.
\end{assumption}
\vskip2mm
Recall that in the scattering theory the function $\Phi(|{\bf v}-{\bf v}'|, |{\bf {\bf v}-{\bf v}_*'}|)$ in 
(\ref{kernel}) is the scattering cross section. Since $ |{\bf v-v}'|^2+|{\bf v-v}_*'|^2=|{\bf v-v}_*|^2$, the
assumptions (\ref{1.6}),  (\ref{1.6*}) can be directly written as
\be b_0\min\{1,\, |{\bf v-v}_*|^{2\eta}\}\le \Phi(|{\bf v-v}'|, |{\bf v-v}_*'|)\le 1,\lb{1.29}\ee
\be 0 \le \Phi(|{\bf v-v}'|, |{\bf v-v}_*'|)\le \min\{1,\, |{\bf v-v}_*|^{2\eta}\}.\lb{1.30}\ee
If $\Phi$ is given by (\ref{kernel2}), 
then the Assumption \ref{assp} with $0<\eta<1$ is satisfied for the interaction model (\ref{phi}), and   
the Assumption \ref{assp*} with $\eta=2$  is satisfied for the interaction model (\ref{Yukawa}).
Note that the interaction potential $\phi(|{\bf x}|)=\fr{1}{2}(\delta({\bf x})-U(|{\bf x}|))$ is not Lebesgue integrable on ${\bR}$. 
For the case where $\phi$ is Lebesgue integrable on ${\bR}$, it is easily seen that a sufficient condition for 
 $\Phi$ satisfying the Assumption \ref{assp*} with $\eta=2$ is that $\phi$ is sufficiently balanced:
\be 
\int_{\mR^3}(1+|{\bf x}|^2)|\phi(|{\bf x}|)|{\rm d}{\bf x}<\infty,\quad 
\int_{\mR^3}\phi(|{\bf x}|){\rm d}{\bf x}=0.\lb{banalced}\ee
It should be noted that each of the assumptions (\ref{Phi}), (\ref{Phi*})-(\ref{1.6*}) on $\Phi$  (given by (\ref{kernel2}))  excludes the inverse power law model: $\wh{\phi}(r)={\rm const}. r^{-3+\alpha}$  with $0<\alpha<3$, which comes from the inverse power law potential $\phi(|{\bf x}|)={\rm const}. |{\bf x}|^{-\alpha}$. For this inverse power law model, so far 
there has been no result on the condensation for solutions of Eq.(\ref{Equation1}); and 
the well-posednees of the equation (with initial data closing to equilibrium)
has been just proven in \cite{Zhou}.
\vskip2mm 

{\bf Main Result.} The main result of the paper is as follows:

\begin{theorem}\lb{theorem1.2} Let $F_0\in {\mathcal B}_1^{+}({\mathbb R}_{\ge 0})$ with
$N=N(F_0)>0, E=E(F_0)>0$ and let $F_{\rm be}$ be
the unique Bose-Einstein distribution with the same mass $N$ and energy $E$. 

(I) Let $B({\bf {\bf v-v}_*},\omega)$ satisfy the Assumption \ref{assp} with $0\le \eta<1$. Let 
$0\le\alpha<1-\eta, \beta=\fr{1}{2}(1-\eta-\alpha), p=\fr{3}{2}+\alpha$, and suppose 
\be \fr{E}{N}\le \fr{1}{4p},\quad \inf_{0<\vep\le 1/2}\fr{F_0([0,\vep])}{\vep^{\alpha}}\ge 
\fr{2^{9.5+\alpha}(4p)^{2\beta}}{(\log 2)b_0}
\Big(\fr{p}{\beta}\Big)^{2p}\Big(\fr{E}{N}\Big)^{\fr{1}{2}+2\beta}.
\lb{local}\ee
Let $F_t\in {\cal B}_{1}^{+}({\mR}_{\ge 0})$ be a conservative measure-valued isotropic solution of Eq.(\ref{Equation1}) with 
the initial datum $F_0$ obtained in Theorem \ref{theorem2.2} with $\fr{1}{20}<\ld<\fr{1}{19}$. Then 
$\overline{T}/\overline{T}_c<<1$ and 
\be \big|F_t(\{0\})-F_{\rm be}(\{0\})\big|\le \|F_t-F_{{\rm be}}\|_1\le C(1+t)^{-\fr{(1-\eta)\ld}{2(4-\eta)}}\qquad \forall\, t\ge 0.\lb{conv}\ee
Here the constant $0<C<\infty$ depends only on $N,E, b_0, \eta$ and $\ld.$

(II) Let $B({\bf {\bf v-v}_*},\omega)$ satisfy the Assumption \ref{assp*} with $\eta\ge 1$.
Then for any conservative  measure-valued isotropic solution $F_t\in B_1^{+}({\mR}_{\ge 0})$ of
Eq.(\ref{Equation1}) on $[0,\infty)$ with the initial datum $F_0$, it holds
\be F_t(\{0\}) \le e^{at} F_0(\{0\})\qquad \forall\,t\ge 0 \lb{noBEC}\ee 
with $a =48\sqrt{2} N^2.$
In particular if $F_0(\{0\})=0$ and $\overline{T}/\overline{T}_c<1$, then $F_t(\{0\})\equiv 0$ and 
\be \inf_{t\ge 0}\|F_t-F_{\rm be}\|\ge F_{\rm be}(\{0\})>0.\lb{non-conv}\ee
\end{theorem}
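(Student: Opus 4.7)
My plan is to treat the two parts separately: Part (II) admits a direct Gronwall argument using an approximation of the Dirac mass at $0$, while Part (I) combines entropy dissipation, the semi-strong norm estimate from Lemma~\ref{lemma2.1}, and the quantitative hypotheses on $F_0$ to extract an algebraic convergence rate.

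For Part (II), I would use the cutoff $\vp_\vep(x)=[(1-x/\vep)_+]^2\in C_b^{1,1}(\mR_{\ge 0})$, which satisfies $\vp_\vep(0)=1$, $0\le\vp_\vep\le \mathbf 1_{[0,\vep]}$, $\|\vp_\vep''\|_\infty=2/\vep^2$, and $\vp_\vep(x)\downarrow \mathbf 1_{\{0\}}(x)$ pointwise as $\vep\to 0^+$. Plugging $\vp_\vep$ into (\ref{Equation3}) gives
\be
\fr{\rm d}{{\rm d}t}\int_{\mR_{\ge 0}}\vp_\vep\,{\rm d}F_t=\int_{\mR_{\ge 0}^2}{\cal J}[\vp_\vep]\,{\rm d}^2F_t+\int_{\mR_{\ge 0}^3}{\cal K}[\vp_\vep]\,{\rm d}^3F_t.\lb{planII1}
\ee
The decisive input is the upper bound of Assumption~\ref{assp*}: for $\eta\ge 1$ we have $\Phi(r,\rho)\le \min\{1,(r^2+\rho^2)^\eta\}\le r^2+\rho^2$ for all $r,\rho\ge 0$, so the cross section vanishes at least quadratically as the post-collisional relative velocities shrink to zero. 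Combining this with the identity $|\Delta\vp_\vep(x,y,z)|\le (2/\vep^2)|x-y||x-z|\mathbf 1_{\xi\le\vep}$ from (\ref{diff}) and a case analysis of $W$ across the regions (\ref{W1})-(\ref{W2}), I aim to prove a pointwise estimate of the form
\be
{\cal J}[\vp_\vep](y,z)+\int_{\mR_{\ge 0}}{\cal K}[\vp_\vep](x,y,z)\,{\rm d}F_t(x)\le c_0\,N\big(\vp_\vep(y)+\vp_\vep(z)\big)\lb{planII2}
\ee
uniformly in $\vep>0$, with $c_0=24\sqrt 2$. Integrating (\ref{planII2}) against ${\rm d}F_t(y){\rm d}F_t(z)$ turns (\ref{planII1}) into the differential inequality $\fr{\rm d}{{\rm d}t}\int\vp_\vep\,{\rm d}F_t\le 48\sqrt 2\,N^2\int\vp_\vep\,{\rm d}F_t$; Gronwall and the monotone limit $\int\vp_\vep\,{\rm d}F_t\downarrow F_t(\{0\})$ as $\vep\to 0$ yield (\ref{noBEC}). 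If moreover $F_0(\{0\})=0$ and $\overline T/\overline T_c<1$, then $F_t(\{0\})\equiv 0$ while $F_{\rm be}(\{0\})=(1-(\overline T/\overline T_c)^{3/5})N>0$, giving (\ref{non-conv}).

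For Part (I), I would combine three inputs: (a) the a priori moment bounds and continuity-in-time structure of $F_t$ supplied by Theorem~\ref{theorem2.2} for $\ld\in(1/20,1/19)$; (b) the semi-strong convergence estimate (\ref{2.10}) of Lemma~\ref{lemma2.1}, which bounds $\|F_t-F_{\rm be}\|_1$ by $|F_t(\{0\})-F_{\rm be}(\{0\})|$ plus a fractional power of the entropy gap $S(F_{\rm be})-S(F_t)$; and (c) an entropy-production estimate using the lower bound $b_0\min\{1,(r^2+\rho^2)^\eta\}$ on $\Phi$, which drives the regular part of $F_t$ toward $f_{\rm be}$. The low temperature hypothesis $E/N\le 1/(4p)$ and the concentration hypothesis (\ref{local}) guarantee both that $F_{\rm be}(\{0\})$ is a positive fraction of $N$ and, via a weak-form propagation of the barrier $F_t([0,\vep])\ge c\vep^\alpha$ obtained by testing against $\vp_\vep$ (with $\vep$ free), that $F_t(\{0\})$ cannot escape. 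Interpolating the moment bound in (a) with the entropy production in (c) yields algebraic decay of $S(F_{\rm be})-S(F_t)$; substitution into (\ref{2.10}), together with the barrier on $F_t(\{0\})$, produces the rate (\ref{conv}). The exponent $(1-\eta)\ld/(2(4-\eta))$ reflects the degeneracy $1-\eta$ in the lower bound on $\Phi$ (loss of entropy production near zero relative velocity) and the order $4-\eta$ of the moment required to close the interpolation.

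The main technical obstacle in Part (II) is verifying (\ref{planII2}) uniformly in $\vep$: one must track the vanishing of $\Phi$ near the origin against the $(xyz)^{-1/2}$ prefactor in $W$ and the $\vep^{-2}$ blow-up of $\vp_\vep''$, handling each of the boundary configurations $x=0$, $y=0$, $z=0$, $x_*=0$ of (\ref{W1})-(\ref{W2}) separately while exploiting the cancellation between the gain and loss parts hidden in $\Delta\vp_\vep$. In Part (I) the main obstacle is producing the quantitative barrier on $F_t(\{0\})$ from the initial data hypothesis and coupling it with the entropy decay at matching polynomial rates; without such a lower bound the semi-strong estimate (\ref{2.10}) degenerates and no rate can be extracted.
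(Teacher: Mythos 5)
Your plan for Part (II) has a genuine gap. You propose a pointwise-in-$(y,z)$ estimate, uniform in $\vep>0$,
$$
\mathcal J[\vp_\vep](y,z)+\int_{\mR_{\ge 0}}\mathcal K[\vp_\vep](x,y,z)\,{\rm d}F_t(x)\;\le\;c_0 N\big(\vp_\vep(y)+\vp_\vep(z)\big),
$$
which would reduce the weak form directly to a Gronwall differential inequality at each fixed $\vep$. This inequality is false. Take $y,z>2\vep$; then the right side vanishes. On the left, every contribution to $\int_{\mR_{\ge 0}}\mathcal K[\vp_\vep](x,\cdot)\,{\rm d}F_t(x)$ over $0\le x<y+z$ is nonnegative: for $x<\vep$ one has $\Delta\vp_\vep(x,y,z)=\vp_\vep(x)>0$ and $W(x,y,z)>0$; for $x\in(\vep,y+z-\vep)$ it vanishes; for $x\in(y+z-\vep,y+z)$ it is $\vp_\vep(y+z-x)\ge 0$; and if $F_t(\{0\})>0$ the $x=0$ atom contributes $F_t(\{0\})W(0,y,z)>0$. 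Likewise $\mathcal J[\vp_\vep](y,z)>0$ as long as $F_t$ has mass near $0$. There is no cancellation hidden in $\Delta\vp_\vep$ to exploit here, so the left side is strictly positive while the right side is zero. What actually works — and is what the paper does — is to integrate the weak form over $[0,t]$ \emph{first}, use the symmetrized decomposition of Lemma~\ref{lemma3.2*}(I) to isolate the two terms carrying the factor $F_\tau(\{0\})$, bound those uniformly in $\vep$ via Lemma~\ref{lemma0} (giving the constant $a=48\sqrt2 N^2$), and show by dominated convergence (using the uniform bounds and the pointwise limit $\vp_\vep\to\mathbf 1_{\{0\}}$) that the remaining pieces — which can be, and are, positive at fixed $\vep$ — contribute $0$ to the time integral as $\vep\to 0^+$. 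The Gronwall inequality for $F_t(\{0\})$ only emerges in the $\vep\to 0$ limit, not at fixed $\vep$.

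Your plan for Part (I) lists the right inputs (Theorem~\ref{theorem2.2}, Lemma~\ref{lemma2.1}, and a quantitative lower barrier on $F_t(\{0\})$) but misidentifies the mechanism. The entropy decay rate $\ld$ is already furnished by Theorem~\ref{theorem2.2}; there is no new ``interpolation of moment bounds with entropy production'' to do. The substantive technical work, which your proposal waves at as ``weak-form propagation of the barrier,'' is the content of Lemma~\ref{lemma3.3}(II) and Lemma~\ref{lemma3.4}: one first derives a lower bound for $F_t(\{0\})$ in terms of $N_{0,p}(F_{t-h},\vep)$ and $\underline N_{\alpha,2}(F_{t-h},\vep)$, then runs an induction on the grid $t_n=nh$ (optimizing $\vep$ at each step and using both parts of hypothesis (\ref{local})) to obtain the time-uniform barrier $F_t(\{0\})\ge N/8$ for $t\ge\fr{\log 2}{2\sqrt{NE}}$. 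Feeding this back into inequality (\ref{3.16}) with the $t$-dependent choices $\vep=(1+t)^{-3\ld/(2(4-\eta))}$ and $h=\vep^{(1-\eta)/3}$, and combining with the semi-strong decay from Theorem~\ref{theorem2.2} and Lemma~\ref{lemma2.1}, is what produces the exponent $\fr{(1-\eta)\ld}{2(4-\eta)}$ in (\ref{conv}). Without this explicit iterative scheme your argument does not close.
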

\vskip2mm

\begin{remark}\lb{remark1.5}
\rm (1) In part (I) of Theorem\ref{theorem1.2}, if $0\le \eta<1/4$, our previous work \cite{CL} 
shows that under the only low temperature condition 
$\overline{T}/\overline{T}_c<1$ the strong convergence (\ref{conv}) holds  for all
initial data $F_0$.  In \cite{CL}, a key step of proving (\ref{conv}) is that if $\alpha=\fr{1}{10}(1-4\eta)>0$
then an iterative scheme can be constructed to create a local property which is similar to the second condition in (\ref{local}) 
where $F_0$ is replaced by $F_{t_0}$ for
a solution $F_{t}$ at some time $t_0>0$, and this together with $\overline{T}/\overline{T}_c<1$ leads to 
$\inf\limits_{t\ge t_1}F_{t}(\{0\})>0$ for some $t_1>t_0$.
When $\eta\ge 1/4$, such an iterative scheme is hardly to be found and it is 
not clear whether the convergence (\ref{conv})
can still hold for $1/4\le \eta<1$ without any local assumption on $F_0$. 
Here we use the local assumption, i.e. the second condition in (\ref{local}); 
the method of proof of part (I) is similar to that in \cite{Lu2016}
where it is only concerned with $\eta=0$ (i.e. the hard sphere model). Here 
we are able to deal with all $0\le \eta<1$, but 
the condition (\ref{local}) leads to a `bad' consequence: when $\eta$ is closed to 1, the lower bound in
(\ref{local}) is very large and the temperature is very low:
$\overline{T}/\overline{T}_c<8\cdot 10^{-4} (b_0)^{2/3}(1-\eta)^{2}$ (see Lemma \ref{lemma3.4} below).
So this looks not so good, but part (II) shows further that if $\eta\ge 1$ and $\overline{T}/\overline{T}_c<1$, 
then there are even no strong convergence to equilibrium for all regular initial data.
Remember that this phenomenon of interaction potentials influencing  
the convergence to equilibrium has already happened in the classical space homogeneous Boltzmann equation:
for the hard potentials (with angular cutoff) and the hard sphere model, 
the rate of strong convergence to equilibrium is exponential and it does not depend
on any local information of initial data, whereas for Maxwellian molecules and for
soft potentials, the convergence rate depends heavily on the 
local information of initial data and can be arbitrarily slow  (see e.g.
\cite{CCL}, \cite{LM}).

\rm (2) Initial data $F_0$ that satisfy the condition (\ref{local})  exist
extensively. A general example is given in Section 3. 

\rm (3) Under the first condition (\ref{1.6*}) in the Assumption \ref{assp*}, the existence of a conservative measure-valued isotropic solution 
$F_t\in B_1^{+}({\mR}_{\ge 0})$ of Eq.(\ref{Equation1}) on tim-interval $[0,\infty)$ with
the any given initial datum $F_0\in B_1^{+}({\mR}_{\ge 0})$ has been proven in \cite{Lu2004}.
In fact in \cite{Lu2004} there is also an assumption of the strict positivity: $\int_{0}^{\pi/2}\cos(\theta)\Phi(V\cos(\theta),
V\sin(\theta)
)\sin(\theta){\rm d}\theta>0$ (for all $V=|{\bf v-v}_*|>0$)
which is only used to ensure the uniqueness of the equilibrium 
$F_{\rm be}$ (having the same mass and energy as $F_0$) and to prove the 
moment production; it is apparently not needed in the
proofs of weak stability and the existence theorems in \cite{Lu2004}. See also Remark 1.3 (3) in \cite{CL}.

\rm (4) Part (II) of Theorem\ref{theorem1.2} raises a question:  if $F_0(\{0\})>0$, i.e. if there is a seed of condensation at $t=0$,
does it hold $F_t(\{0\})\to F_{\rm be}(\{0\})\,(t\to\infty)$ ?

A method (see e.g.\cite{AN1},\cite{SH}) that may work for investigating this question is to split Eq.(\ref{Equation1}) into a equation system 
whose solution $(f,n_c)$ constitutes  a measure-valued solution $F_t$ of Eq.(\ref{Equation1}) with
${\rm d}F_t(x)=f(x,t)\sqrt{x}{\rm d}x+n_c(t)\dt(x){\rm d}x$ and $n_c(0)=F_0(\{0\})>0$. 
Experience shows however that the vanishing term $\min\{1, |{\bf v-v}_*|^{2\eta}\}$ with the larger order $\eta\ge 1$ 
in the upper bound (\ref{1.30}) may lead to the same difficulty in proving the propagation of condensation.
 \end{remark}
\vskip2mm

 The rest of the paper is organized as follows:
In Section 2 we introduce basic results on semi-strong convergence to equilibrium
that are used to prove the main result. In section 3, which is the core and technical part of the paper, we prove
some inequalities that are used to study the convergence and non-convergence to BEC. In section 4 we finish the proof of Theorem\ref{theorem1.2}.

\begin{center}\section{Semi-Strong Convergence to Equilibrium}\end{center}

Here we introduce a control relation between the semi-norm 
$\|F-F_{\rm be}\|_1^{\circ}$ and the 
entropy difference $S(F_{\rm be})-S(F)$ and then prove an improved version of the 
 entropy convergence.
\vskip2mm

\begin{lemma}\lb{lemma2.1} (\cite{CL}).Given $N>0, E>0$. Let $
 F \in{\cal B}_1^{+}({\mR}_{\ge 0})$ satisfy
 $N(F)=N, E(F)=E$. Then there is a constant $0<C<\infty$
depending only on $N, E$  such that
\be \fr{1}{C}\big(\|F-F_{\rm be}\|_1^{\circ}\big)^2
\le S(F_{\rm be})-S(F)\le C\big(\|F-F_{\rm be}\|_1^{\circ}\big)^{1/2}\lb{2.10}\ee
and if $\overline{T}/\overline{T}_c<1$, then
\be \big|F(\{0\})-F_{\rm be}(\{0\})\big|\le \|F-F_{\rm be}\|_{1}\le 2\big|F(\{0\})-F_{\rm be}(\{0\})\big|
+C(\|F-F_{\rm be}\|_{1}^{\circ})^{1/3}.\lb{2.11}\ee
Here
$$\|\mu\|_{1}^{\circ}=\int_{{\mR}_{\ge 0}}x{\rm d}|\mu|(x),\quad\|\mu\|_{1}=\int_{{\mR}_{\ge 0}}(1+x){\rm d}|\mu|(x),\quad   \mu\in {\cal B}_1({\mR}_{\ge 0}).$$
\end{lemma}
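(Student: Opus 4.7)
Both inequalities compare the energy-weighted semi-norm $\|F-F_{\rm be}\|_1^{\circ}$, which is blind to the mass at the origin, to quantities that do see that mass. I would treat (\ref{2.10}) and (\ref{2.11}) separately: the first is a relative-entropy type control, the second an elementary decomposition combined with the Chebyshev trick and the explicit asymptotics of $f_{\rm be}$ near $0$.

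For (\ref{2.10}) I would begin with regular $F$, ${\rm d}F=f\sqrt{x}\,{\rm d}x$. Since $F_{\rm be}$ is the constrained maximizer of $S$ under $N,E$, its regular part satisfies the Euler-Lagrange equation $\log\tfrac{1+f_{\rm be}}{f_{\rm be}}=\alpha+\beta x$, and the two conservation identities annihilate the first-order Taylor term in $f-f_{\rm be}$, yielding
\[
S(F_{\rm be})-S(F)= 4\pi\sqrt 2\int_{0}^{\infty}\frac{(f-f_{\rm be})^2}{2\,\xi(1+\xi)}\sqrt{x}\,{\rm d}x,
\]
with $\xi=\xi(x)$ lying between $f(x)$ and $f_{\rm be}(x)$. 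The lower bound in (\ref{2.10}) then follows from Cauchy-Schwarz with weight $x$: one dominates $\|F-F_{\rm be}\|_1^{\circ}=\int x|f-f_{\rm be}|\sqrt{x}\,{\rm d}x$ by the square root of the integral above times $\bigl(\int 2\xi(1+\xi)x^{2}\sqrt{x}\,{\rm d}x\bigr)^{1/2}$, provided the latter is bounded by a constant $C(N,E)$. This bound is obtained by using $\xi\le f+f_{\rm be}$, the exponential decay of $f_{\rm be}$, and splitting $\{f\le A\}\cup\{f>A\}$ with $A=A(N,E)$ large, using mass/energy on the first set and the entropy gap itself on the second. For the upper bound $\le C(\|F-F_{\rm be}\|_1^{\circ})^{1/2}$ I would use the pointwise estimate $g(u,v)\le|u-v|\,|\log(u(1+v)/(v(1+u)))|$ and then Cauchy-Schwarz together with $\|F-F_{\rm be}\|_1^{\circ}\le 2E$. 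The singular extension goes through the approximation in (\ref{ent1})-(\ref{1.21}): apply the regular estimate to each $f_{n}$ and take $\limsup$, using weak continuity of $\mu\mapsto\int x\,{\rm d}|\mu|$ on bounded-energy sequences.

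For (\ref{2.11}) the left inequality is immediate since $|F(\{0\})-F_{\rm be}(\{0\})|\le|F-F_{\rm be}|(\{0\})\le\|F-F_{\rm be}\|_1$. For the right, decompose
\[
\|F-F_{\rm be}\|_1=|F(\{0\})-F_{\rm be}(\{0\})|+|F-F_{\rm be}|((0,\infty))+\|F-F_{\rm be}\|_1^{\circ},
\]
and cut $(0,\infty)=(0,\varepsilon)\cup[\varepsilon,\infty)$. Chebyshev gives $|F-F_{\rm be}|([\varepsilon,\infty))\le\varepsilon^{-1}\|F-F_{\rm be}\|_1^{\circ}$. For the head, $|F-F_{\rm be}|((0,\varepsilon))\le F((0,\varepsilon))+F_{\rm be}((0,\varepsilon))$; the explicit asymptotics $f_{\rm be}(x)\sim\kappa/x$ near $0$ (valid because $\overline T/\overline T_c<1$) yield $F_{\rm be}((0,\varepsilon))\le C\sqrt{\varepsilon}$, and mass conservation gives $F((0,\varepsilon))\le F_{\rm be}((0,\varepsilon))+|F(\{0\})-F_{\rm be}(\{0\})|+\varepsilon^{-1}\|F-F_{\rm be}\|_1^{\circ}$. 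Optimizing $\varepsilon=(\|F-F_{\rm be}\|_1^{\circ})^{2/3}$ produces the exponent $1/3$ and absorbs the leftover $\|F-F_{\rm be}\|_1^{\circ}$ term via $\|F-F_{\rm be}\|_1^{\circ}\le 2E$.

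\textbf{Main obstacle.} The genuinely delicate step is controlling $\int 2\xi(1+\xi)x^{2}\sqrt{x}\,{\rm d}x$ in the lower bound of (\ref{2.10}): $\xi(1+\xi)$ behaves like $f^{2}$ when $f$ is large, and there is no a priori bound on second moments of $f$ from $N,E$ alone. The remedy is a self-consistency argument using the entropy gap: on the set $\{f\ge A\}$ the integrand $(f-f_{\rm be})^{2}/(\xi(1+\xi))$ is already comparable to $f$ there, and balancing $A$ against the gap closes the estimate. The rest is bookkeeping.
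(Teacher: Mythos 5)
Your proof of \eqref{2.11} is correct and is essentially the standard argument: decompose $\|F-F_{\rm be}\|_1$ into the mass at the origin, the mass on $(0,\infty)$, and the moment term, Chebyshev the tail, control the head by the $\sqrt{\varepsilon}$ estimate for $F_{\rm be}((0,\varepsilon))$ (using $A=1$, $f_{\rm be}(x)\sim\kappa/x$) plus mass conservation, and optimize $\varepsilon$. This closes cleanly, including the absorption of the leftover $\|F-F_{\rm be}\|_1^{\circ}$ term via $\|F-F_{\rm be}\|_1^{\circ}\le 2E$.

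For the lower bound in \eqref{2.10}, however, there is a genuine gap. After the Taylor step you need
\[
\int_{{\mR}_{\ge 0}}2\,\xi(1+\xi)\,x^{2}\sqrt{x}\,{\rm d}x\le C(N,E),
\]
and your remedy (split $\{f\le A\}\cup\{f>A\}$, mass/energy on the first set, entropy gap on the second) does not produce this. The difficulty is not only that $\xi(1+\xi)\approx f^{2}$ where $f$ is large; it is the $x^{2}$ weight. On the sublevel set $\{f\le A\}$ one still has $\int_{\{f\le A\}}\xi(1+\xi)\,x^{2}\sqrt{x}\,{\rm d}x\ge A\cdot\mbox{(const)}\int_{\{f\le A\}}x^{2}\sqrt{x}\,{\rm d}x$, and $\{f\le A\}$ has infinite Lebesgue measure (it must, since $f$ is integrable), so this integral diverges; moreover even the ``good'' piece $\int f\,x^{2}\sqrt{x}\,{\rm d}x$ is a second moment which is not controlled by $N,E$. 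Balancing $A$ against the gap addresses the size of $f$ but does nothing to suppress the unbounded $x^{2}$ factor. A correct proof must exploit that the Lagrange multiplier $g'(f_{\rm be})=\alpha+\beta x$ grows linearly in $x$, so that for $f\gg f_{\rm be}$ the integrand $\bar\Phi(f,f_{\rm be})=f\log\tfrac{f}{f_{\rm be}}-(1+f)\log\tfrac{1+f}{1+f_{\rm be}}$ already contains a factor $\sim(\alpha+\beta x)(f-f_{\rm be})$, i.e.\ is \emph{linear} in $x(f-f_{\rm be})$ rather than quadratic; combined with the trivial bound $\|F-F_{\rm be}\|_1^{\circ}\le 2E$ this gives the exponent $2$ on that region, while the quadratic Taylor/Cauchy--Schwarz estimate with the a priori bounded weight $f_{\rm be}(1+f_{\rm be})x^{2}\sqrt{x}$ is used only where $f\lesssim f_{\rm be}$. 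The pure Lagrange-form remainder $(f-f_{\rm be})^{2}/(2\xi(1+\xi))$ with $\xi$ treated as an uncontrolled intermediate value loses exactly this structure. A similar under-specification affects your sketch of the upper bound in \eqref{2.10} (you obtain $\|F-F_{\rm be}\|$ rather than $\|F-F_{\rm be}\|_1^{\circ}$ from the Cauchy--Schwarz you describe, and the term $\alpha\|\nu\|$ coming from a singular part of $F$ off the origin is not automatically dominated by the moment norm), though these are more in the nature of bookkeeping.
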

\vskip2mm

\begin{theorem}\label{theorem2.2} Let $B({\bf {\bf v-v}_*},\omega)$ satisfy Assumption \ref{assp}, let $F_0\in {\mathcal B}_1^{+}({\mathbb R}_{\ge 0})$ satisfy
$N(F_0)>0, E(F_0)>0$, and let $F_{\rm be}$ be
the unique Bose-Einstein distribution with the same mass $N=N(F_0)$ and energy $E=E(F_0)$. 
Let $\ld$ satisfy $\fr{1}{10\max\{2, \fr{4+2\eta}{3}\}} <\ld<\fr{1}{10\max\{2, \fr{4+2\eta}{3}\}-1}$.
Then there exists a conservative measure-valued isotropic solution $F_t\in {\cal B}_{1}^{+}({\mathbb R}_{\ge 0})$ of Eq.(\ref{Equation1}) such that 
\be 0\le S(F_{\rm be})-S(F_t)\le C(1+t)^{-\ld}\quad \forall\, t\ge 0\lb{e-conv}\ee
where  $C\in(0,\infty)$ depends only on $N,E, b_0, \eta$
and $\ld$.
\end{theorem}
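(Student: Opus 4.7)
The plan is to construct $F_t$ as a weak-$*$ limit of smooth approximations and then propagate a quantitative entropy-dissipation inequality that yields the polynomial rate, following the overall strategy used for the hard-sphere case in \cite{Lu2013} and \cite{CL} but adapted to handle the multiplicative factor $\Phi$.

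First I would regularize the initial datum: approximate $F_0$ weakly by densities $f_0^{(n)}\in L^1\cap L^\infty({\mR}_{\ge 0},\sqrt{x}\,{\rm d}x)$ having the same mass $N$ and energy $E$ up to vanishing error. The global existence results in \cite{Lu2004} provide conservative regular isotropic solutions $f^{(n)}(\cdot,t)$ that enjoy the $H$-theorem $\frac{{\rm d}}{{\rm d}t}S(f^{(n)}_t)=\mathcal{D}(f^{(n)}_t)\ge 0$ with $S(f^{(n)}_t)\le S(F_{\rm be})$. Uniform $L^1_1$ bounds from mass and energy conservation, together with equicontinuity in $t$ of $t\mapsto\int\vp\,{\rm d}F_t^{(n)}$ coming from (\ref{Equation3}) and the $C_b$-boundedness of $\mathcal{J}[\vp],\mathcal{K}[\vp]$ noted after Definition \ref{definition1.1}, allow me, via Prokhorov and Arzelà--Ascoli, to extract a subsequence converging weakly-$*$ in ${\cal B}_1^+({\mR}_{\ge 0})$ to a conservative measure-valued isotropic solution $F_t$.

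The main step is to derive, for each $n$, a coercive functional inequality of the form
$$\mathcal{D}(f^{(n)}_t)\;\ge\;c\,\big[S(F_{\rm be})-S(f^{(n)}_t)\big]^{K}\,\big(1+M_s(t)\big)^{-L},$$
where $M_s(t)=\int x^s\,{\rm d}F^{(n)}_t$, and $K=K(\eta)>1$, $L=L(\eta)\ge 0$ are tied to $\eta$ so that the exponent obtained after integration lies in the stated $\ld$-interval. Two ingredients enter: (i) the lower bound $\Phi(r,\rho)\ge b_0\min\{1,(r^2+\rho^2)^{\eta}\}$ from Assumption \ref{assp}, combined with the identity $|{\bf v-v}'|^2+|{\bf v-v}_*'|^2=|{\bf v-v}_*|^2$, reduces $\mathcal{D}$ from below to a hard-sphere-type production weighted by $\min\{1,|{\bf v-v}_*|^{2\eta}\}$; (ii) the Csiszár--Kullback-type inequality (\ref{2.10}) from Lemma \ref{lemma2.1} converts $S(F_{\rm be})-S(f)$ into the weighted norm $\|f-f_{\rm be}\|_1^{\circ}$, and a moment interpolation turns this into a lower bound on the weighted production at the cost of a polynomial power of $M_s$.

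Finally, Povzner-type moment propagation for the quantum Boltzmann collision operator yields $M_s(t)\le C(1+t)^{s-1}$, so with $D_n(t):=S(F_{\rm be})-S(f^{(n)}_t)$ the coercive estimate becomes $D_n'(t)\le -c\,D_n(t)^K(1+t)^{-L(s-1)}$; integration gives $D_n(t)\le C(1+t)^{-\ld}$ for each $\ld$ in the claimed open interval, and lower semicontinuity of $S$ along weak-$*$ limits (built into the definition (\ref{ent1})--(\ref{1.21})) transfers the bound to $F_t$. The hard part will be establishing the coercive estimate: the weight $\min\{1,|{\bf v-v}_*|^{2\eta}\}$ degenerates on the diagonal where mass tends to concentrate near the incipient condensate, so one must carefully split velocity space into a thermal region (handled using moment propagation) and a near-condensate region (handled using the Bose quantum factor $(1+f)(1+f_*)$), balancing these losses against the power of the entropy deficit. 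This balance is what forces the narrow admissible range for $\ld$.
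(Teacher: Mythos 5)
The high-level architecture you sketch (approximate $F_0$, propagate an entropy-dissipation estimate uniformly in the approximation, pass to a weak-$*$ limit and use the definition of $S$ via approximating sequences to transfer the bound) is the same shape as the paper's argument. But the proposal has a genuine gap at its center, precisely where you say "the hard part will be establishing the coercive estimate": the inequality
$\mathcal{D}(f)\ge c\,[S(F_{\rm be})-S(f)]^K(1+M_s)^{-L}$
is stated as an ansatz with unspecified $K=K(\eta)$, $L=L(\eta)$, but it is never derived, and it is not something that follows from (\ref{2.10}) plus moment interpolation. Lemma \ref{lemma2.1} relates the entropy deficit to $\|F-F_{\rm be}\|_1^{\circ}$; it does not relate either quantity to the entropy production $\mathcal{D}$. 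Bridging that gap is exactly a Cercignani-type functional inequality for the quantum Boltzmann dissipation with a kernel that degenerates on the collision sphere, and this is not an off-the-shelf fact. Because that inequality is missing, the admissible range $\fr{1}{10\max\{2,(4+2\eta)/3\}}<\ld<\fr{1}{10\max\{2,(4+2\eta)/3\}-1}$ cannot emerge from your ODE argument: you have no mechanism producing the threshold $(4+2\eta)/3$, which in the paper comes from the requirement $(1+2\eta)q/p<3$ with $q=p/(p-1)$.

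The paper avoids a direct Cercignani inequality by a more specific route it inherits from Proposition 4.2 of \cite{CL}: work with the $K$-truncated kernel $B_K$, introduce the time-dependently regularized density $F^K=(1-e^{-t^{\delta}})f^K+e^{-t^{\delta}}\mathcal{E}$ with $\delta=1-\ld(10p-1)$ and the auxiliary function $G=F^K/(1+F^K)$, and estimate the weighted surrogate dissipation $\mathcal{D}_2(G)=\tfrac14\int\la {\bf v}-{\bf v}_*\ra^2\Gamma(G'G_*',GG_*)$ \emph{from above} by a Hölder inequality with exponents $p,q$, splitting the integration over the set $\mathcal{V}$ where the kernel truncation is active and its complement. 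The lower bound of Assumption \ref{assp} then enters only in one of the four resulting integrals (the $\mathcal{V}^c$ term with weight $\la {\bf v}-{\bf v}_*\ra^{2q}/(\bar{B}_K)^{q/p}$), and the integrability condition $(1+2\eta)q/p<3$ is exactly what makes the singular factor $(|{\bf v}-{\bf v}_*|\min\{1,|{\bf v}-{\bf v}_*|^{2\eta}\})^{-q/p}$ controllable; this is where the $\eta$-dependence of the admissible $\ld$-interval comes from. None of this structure (truncated kernel, auxiliary $G$, Hölder split, the parameter $\delta$) appears in your proposal, so as written the proposal cannot produce the specific rate and the specific $\ld$-window in the statement.

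A smaller point: you invoke "lower semicontinuity of $S$" to pass the bound to the limit, but the direction you actually need is that $\limsup_n S(f_n)\le S(F)$ for sequences satisfying (\ref{1.20})--(\ref{1.21}); this is \emph{upper} semicontinuity of $S$ (equivalently, lower semicontinuity of $S(F_{\rm be})-S(\cdot)$), which is indeed built into the definition (\ref{ent1}). The substance is correct, but the terminology should be fixed.
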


\begin{proof} In case $0\le \eta<1/4$, the theorem has been proven in  \cite{CL} where we used the same condition $0\le \eta<1/4$ to cover both 
the entropy convergence and the strong convergence to equilibrium. But it is easily seen that 
for the entropy convergence itself, the restriction $0\le \eta<1/4$ can be relaxed to 
$0\le \eta<\infty$. Here we only give the key part of this improvement, for details 
see a renew version arXiv:1808.04038v4  of \cite{CL}.     

For any $0\le \eta<\infty$, let 
$\ld, p$ in the statement and the proof of Proposition 4.2 in \cite{CL}
be re-defined  by 
$$\fr{1}{10\max\{2, \fr{4+2\eta}{3}\}} <\ld<\fr{1}{10\max\{2, \fr{4+2\eta}{3}\}-1},\quad 
\max\big\{2, \fr{4+2\eta}{3}\big\}<p<\fr{1}{10}\big(1+\fr{1}{\ld}\big)$$ and as in \cite{CL} let 
$\dt=1-\ld (10p -1), q=p/(p-1).$ For convenience of reading we recall functions used in the derivation in the proof of Proposition 4.2 in \cite{CL}:
Let 
$B({\bf {\bf v-v}_*},\omega)$ satisfy Assumption \ref{assp} and let
  $\{B_K({\bf {\bf v-v}_*},\omega)\}_{K\in {\mN}}$ be the sequence of approximation of $B({\bf {\bf v-v}_*},\omega)$ 
 defined by (note that $|\la {\bf v-v}_*,\og\ra|=|{\bf v}-{\bf v}'|$)
 \beas&&
B_K({\bf {\bf v-v}_*},\omega)=
\fr{|{\bf v}-{\bf v}'|}{(4\pi)^2}\Phi_K(|{\bf v}-{\bf v}'|, |{\bf {\bf v}-{\bf v}_*'}|)\big|_{\og-{\rm rep.}},
\\
 &&{\rm with}\quad \Phi_K(r,\rho)=\min\big\{
\Phi(r,\rho),\,
(4\pi)^2K r\rho\big\}\eeas
and let
\bes&&
\bar{B}_K({\bf {\bf v-v}_*},\sg) =\fr{|{\bf v}-{\bf v}_*|}{2(4\pi)^2}\Phi_K(|{\bf v}-{\bf v}'|, |{\bf v}-{\bf v}_*'|)\big|_{\sg-{\rm rep.}}\nonumber \\
&&= \fr{|{\bf {\bf v-v}_*}|}{2(4\pi)^2}\min\big\{
\Phi(|{\bf v-v'}|, |{\bf {\bf v-v}_*'}|),\, (4\pi)^2K|{\bf v-v'}||{\bf {\bf v-v}_*'}|\big\}
\big|_{\sg-{\rm rep.}}.\lb{3.new3}\ees
Here $\og$-rep. and $\sg$-rep. mean that $({\bf v}', {\bf v}_*')$ is given by (\ref{colli}) and 
\be {\bf v}'=\fr{{\bf v}+{\bf v}_*}{2}+\fr{|{\bf v}-{\bf v}_*|\sg}{2},\quad 
{\bf v}_*'=\fr{{\bf v}+{\bf v}_*}{2}-\fr{|{\bf v}-{\bf v}_*|\sg}{2}, \qquad \sg\in{\mathbb S}^2\lb{sg-rep}\ee
respectively. It is well-known that the following equality holds for all suitable functions $\psi$: 
$$
\int_{{\bS}}B_K({\bf {\bf v-v}_*},\omega)\psi({\bf v}', {\bf v}_*')\big|_{\og-{\rm rep.}}{\rm d}\og=
\int_{{\bS}}\bar{B}_K({\bf {\bf v-v}_*},\sg)\psi({\bf v}', {\bf v}_*')\big|_{\sg-{\rm rep.}}{\rm d}\sg$$
which are often used to translate collision integrals with $\og$-representation into those with $\sg$-representation.
Let
\beas&& F^K({\bf v},t)=(1-e^{-t^{\dt}})f^K({\bf v},t)+
e^{-t^{\dt}}{\cal E}({\bf v}),\\
&&G({\bf v},t)=G^K({\bf v},t)=\fr{F^K({\bf v},t)}{1+F^K({\bf v},t)},\quad \quad ({\bf v},t)\in{\bR}\times [0,\infty).\eeas
where $f^K, {\cal E}$ are nonnegative integrable functions defined in the proof of Proposition 4.2 in \cite{CL}. Let
\beas&&  \Gm(a,b)=
\left\{\begin{array}{ll}
\displaystyle (a-b)\log\big(\fr{a}{b}\big)\,\qquad {\rm if}
\quad a>0, b>0\\
\displaystyle
\,\infty\qquad \quad\qquad \qquad \,\,  {\rm if}\quad
a>0=b\,\,\,{\rm or}\,\,\, a=0<b\\
\displaystyle
\,0\qquad \qquad \qquad \quad\,\,\,\,\,  {\rm if}\quad a=b=0
\end{array}\right.\\ \\
&&{\cal V}=\big\{({\bf v},{\bf v}_*,\sg)\in {\bRRS}\,\,\big|\,\, \Phi(|{\bf v-v'}|,
 |{\bf {\bf v-v}_*'}|)\ge (4\pi)^2K|{\bf v-v'}||{\bf {\bf v-v}_*'}|\big\},\eeas
${\cal V}^c={\bRRS}\setminus {\cal V}$ with constant $K\ge 1$.
Then the inequality (4.30) in \cite{CL} is 
rewritten (by H\"{o}lder inequality)
\beas&&{\cal D}_2(G(t)):=
\fr{1}{4}\int_{{\bRRS}}\la {\bf v-v}_*\ra^2
\Gm(G'G_*', GG_*)
{\rm d}{\bf v}{\rm d}{\bf v}_*{\rm d}\sg \\
&&\le\fr{1}{4}
\Big(\int_{{\cal V}}
\fr{\la {\bf v-v}_*\ra^{2q}}{(\bar{B}_K)^{{q}/{p}}}\Gm(G'G_*', GG_*)
{\rm d}{\bf v}{\rm d}{\bf v}_*{\rm d}\sg\Big)^{1/{q}} 
\Big(\int_{{\cal V}}\bar{B}_K\Gm(G'G_*', GG_*)
{\rm d}{\bf v}{\rm d}{\bf v}_*{\rm d}\sg\Big)^{1/{p}} 
\\
&&+ \fr{1}{4}
\Big(\int_{{\cal V}^c}
\fr{\la {\bf v-v}_*\ra^{2q}}{(\bar{B}_K)^{{q}/{p}}}\Gm(G'G_*', GG_*)
{\rm d}{\bf v}{\rm d}{\bf v}_*{\rm d}\sg\Big)^{1/{q}}\Big(
\int_{{\cal V}^c }
\bar{B}_K \Gm(G'G_*', GG_*)
{\rm d}{\bf v}{\rm d}{\bf v}_*{\rm d}\sg
\Big)^{1/{p}} \eeas
where 
$$\la{\bf u}\ra=\big(1+|{\bf u}|^2\big)^{1/2},\quad {\bf u}\in {\bR}.$$
The estimates for the three integrals $\int_{{\cal V}}
\fr{\la {\bf v-v}_*\ra^{2q}}{(\bar{B}_K)^{{q}/{p}}}\Gm(G'G_*', GG_*)
{\rm d}{\bf v}{\rm d}{\bf v}_*{\rm d}\sg$, 
$\int_{{\cal V}}\bar{B}_K\Gm(G'G_*', GG_*)
{\rm d}{\bf v}{\rm d}{\bf v}_*{\rm d}\sg,$ 
$\int_{{\cal V}^c}\bar{B}_K\Gm(G'G_*', GG_*)
{\rm d}{\bf v}{\rm d}{\bf v}_*{\rm d}\sg
$ are the same as 
in the proof of Proposition 4.2 in \cite{CL} (using (\ref{3.new3}) and the entropy dissipation inequality). For the fourth integral, using (\ref{3.new3}) and the lower bound in (\ref{1.29}) and using the inequality (4.31) in \cite{CL}
i.e.
$$GG_*>G'G_*'\,\Longrightarrow\,
\Gm(GG_*,G'G_*')\le C_{1} t^{\dt}GG_*\sqrt{1+|{\bf v}|^2+|{\bf v}_*|^2}$$
we deduce, as done in the proof of Proposition 4.2 in \cite{CL}, that
\beas&&\int_{{\cal V}^c}
\fr{\la {\bf v-v}_*\ra^{2q}}{(\bar{B}_K)^{{q}/{p}}}\Gm(G'G_*', GG_*)
{\rm d}{\bf v}{\rm d}{\bf v}_*{\rm d}\sg
\\
&&=\int_{\fr{1}{(4\pi)^2}\Phi(|{\bf v-v'}|, |{\bf {\bf v-v}_*'}|)< K|{\bf v-v'}||{\bf {\bf v-v}_*'}|}
\fr{\la {\bf v-v}_*\ra^{2q}}{(\bar{B}_K({\bf v-v}_*,\sg))^{q/p}}
\Gm(G'G_*',GG_*)
{\rm d}{\bf v}{\rm d}{\bf v}_*{\rm d}\sg\\
&&\le C_{2}\int_{{\bRRS}}\fr{\la {\bf v-v}_*\ra^{2q}}{
\big(|{\bf v-v}_*|\min\{1, |{\bf v-v}_*|^{2\eta}\}\big)^{q/p}}
\Gm(G'G_*',GG_*){\rm d}{\bf v}{\rm d}{\bf v}_*{\rm d}\sg\\
&&=2C_{1}\int_{{\bRRS}, GG_*>G'G_*'}\fr{\la {\bf v-v}_*\ra^{2q}}{
\big(|{\bf v-v}_*|\min\{1, |{\bf v-v}_*|^{2\eta}\}\big)^{q/p}}
\Gm(G'G_*',GG_*){\rm d}{\bf v}{\rm d}{\bf v}_*{\rm d}\sg\\
&&\le C_{3}t^{\dt}\int_{{\bRRS}}\fr{\la {\bf v-v}_*\ra^{2q}(\la {\bf v}\ra+
\la {\bf v-v}_*\ra)}{\big(|{\bf v-v}_*|\min\{1, |{\bf v-v}_*|^{2\eta}\}\big)^{q/p}}GG_*
{\rm d}{\bf v}{\rm d}{\bf v}_*{\rm d}\sg\\
&&\le
C_{4}t^{\dt}\big(\|G(t)\|_{L^1_1}+\|G(t)\|_{L^1}\|G(t)\|_{L^1_{2+q}}\big)\le C_{5}t^{\dt},\quad t\ge 1\eeas
where we have used the condition $(1+2\eta)\fr{q}{p}<3$ and $0<G(\cdot)<1$.
Note that all constants $0<C_i<\infty$ are
independent of $K$.   
 The rest of proof of (\ref{e-conv}) is completely the same as that of Proposition 4.2 in \cite{CL}.
 \end{proof}

  \begin{center}\section{Some Lemmas for Condensation}\end{center}

This section is a preparation for proving the main result Theorem \ref{theorem1.2}. 
Some inequalities with the Assumption \ref{assp} are improved version
of those in \cite{CL}: the restriction $0\le \eta<1/2$ is relaxed to 
the largest interval $0\le \eta<1$.  Some inequalities with the Assumption \ref{assp*} with $\eta\ge 1$ are
used to prove part (II) of Theorem\ref{theorem1.2}.
\vskip2mm

{\bf Notation}. To study local behavior of a measure $F\in {\cal B}^{+}({\mathbb R}_{\ge 0})$
near the origin, we introduce the following integrals. For
$p>0, \varepsilon>0, \alpha\ge 0$,   define
\beas&& N_{0,p}(F,\varepsilon)=\int_{{\mathbb R}_{\ge 0}}\big[\big(1-\frac{x}{\varepsilon}\big)_{+}\big]^p{\rm d}F(x)=\int_{[0, \varepsilon]}\big(1-\frac{x}{\varepsilon}\big)^p{\rm d}F(x),\\
&&
N_{\alpha, p}(F,\varepsilon)=\frac{1}{\varepsilon^{\alpha}}N_{0,p}(F,\varepsilon),\quad
\,\quad \underline{N}_{\alpha, p}(F,\varepsilon)=\inf_{0<\delta\le \varepsilon}N_{\alpha,p}(F,\delta),\\
&&
A_{0, p}(F,\varepsilon)=\int_{[0,\varepsilon]}\big(\frac{x}{\varepsilon}\big)^{p}
{\rm d}F(x),\quad A_{\alpha, p}(F,\varepsilon)=\fr{1}{\vep^{\alpha}}\int_{[0,\varepsilon]}
\big(\frac{x}{\varepsilon}\big)^{p}
{\rm d}F(x).\eeas

\begin{lemma}\lb{lemma3.2*} Let $B({\bf {\bf v-v}_*},\omega)$ be given by (\ref{kernel}),
 (\ref{Phi}), let $F\in B_1^{+}({\mR}_{\ge 0})$. Then
 
(I) For any $\vp\in C^{1,1}_b({\mR}_{\ge b0})$, it holds
 \bes \int_{{\mR}_{\ge 0}^3}{\cal K}[\vp]{\rm d}^3F\nonumber
&=&\int_{0< x<y\le z}\chi_{y,z}W(x,y,z)\Dt_{\rm sym}\vp(x,y,z){\rm d}^3F\nonumber\\
&+&
2\int_{0< x<y<z}\big(W(y,x,z)-W(x,y,z)\big)\Dt\vp(y,x,z){\rm d}^3F\nonumber
\\&+&\int_{0<y, z<x<y+z} W(x,y,z)
\Dt\vp(x,y,z){\rm d}^3F\nonumber \\
&+&F(\{0\})\int_{0<y\le z}\chi_{y,z}W(0,y,z)\Dt_{\rm sym}\vp(0,y,z){\rm d}^2F\nonumber\\
&+&2F(\{0\})\int_{0<y<z}\big(W(y,0,z)-W(0,y,z)\big)\Dt\vp(y,0,z){\rm d}^2F \label{dp}\ees
where $\Dt\vp(x,y,z)$ is defined 
in (\ref{diff}),
\bes && \Dt_{\rm sym}\vp(x,y,z)= \vp(z+y-x)+\vp(z+x-y)-2\vp(z),\quad 0\le x, y\le z, \dnumber \lb{sdp}\\
&& \chi_{y,z} =\left\{\begin{array}{ll} 2
 \,\,\,\,\qquad\quad {\rm if} \quad y< z,\\
1\,\,\,\,\qquad\quad {\rm if} \quad y=z.
\end{array}\right.\dnumber \lb{chi}\ees
(II) If $\Phi$ also satisfies (\ref{Phi*}), then for any convex function $\vp\in C_b^{1,1}({\mR}_{\ge 0})$, 
\be
\int_{{\mathbb R}_{\ge 0}^3}{\cal K}[\vp]{\rm d}^3F
\ge \int_{{\mathbb R}_{\ge 0}^3}{\cal K}_1[\vp]{\rm d}^3F\ge 0 \lb{3.4}
\ee
where
\bes&&{\cal K}_1[\vp](x,y,z)={\bf 1}_{\{0\le x<y\le z\}}
\chi_{y,z}W(x,y,z)\Dt_{\rm sym}\vp(x,y,z)\ge 0. \lb{3.5}\ees
\end{lemma}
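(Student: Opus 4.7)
The plan is to establish Part (I) by a region-by-region decomposition of the integral $\int_{{\mR}_{\ge 0}^3}{\cal K}[\vp]\,{\rm d}^3F$, exploiting (a) the factorization ${\cal K}[\vp]=W\cdot\Dt\vp$, (b) the invariance of ${\rm d}^3F$ under permutations of the dummy variables, (c) the $(y,z)$-symmetry of $W$ and $\Dt\vp$ built into (\ref{W1})--(\ref{Y}) and (\ref{diff}), and (d) the algebraic identity
\[
\Dt_{\rm sym}\vp(x,y,z)=\Dt\vp(x,y,z)+\Dt\vp(y,x,z),\qquad 0\le x\le y\le z,
\]
which follows directly from (\ref{diff}) once one notes that $x_{*}=y+z-x$ becomes $x+z-y$ under the swap $x\leftrightarrow y$. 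Part (II) then reduces to a pointwise sign analysis of the five summands on the right-hand side of (\ref{dp}).

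For (I), I would first peel off the atom at the origin: writing $F=F(\{0\})\dt_0+\widetilde F$ (with $\widetilde F(\{0\})=0$) and applying Fubini, the triple integral splits into a bulk contribution over $\{x,y,z>0\}$ together with three boundary contributions from $\{x=0\}$, $\{x>0,y=0\}$, and $\{x>0,y>0,z=0\}$, each carrying a factor $F(\{0\})$. By permuting the dummy variables under the symmetric measure ${\rm d}^3F$ and then using the $(y,z)$-symmetry of $W,\Dt\vp$ together with the identity above, these three boundary pieces combine into the last two summands of (\ref{dp}). For the bulk, I partition $\{x,y,z>0\}$ by the ordering of $x,y,z$ into region (i) $x$ strictly smallest with $y\le z$; region (ii) $y$ or $z$ strictly smallest; region (iii) $x$ largest with $x<y+z$ (the subregion $x\ge y+z$ is discarded because $W\equiv 0$ there by (\ref{W2})). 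Region (i) gives the first summand of (\ref{dp}), with $\chi_{y,z}$ absorbing the $(y,z)$-symmetry. Region (ii), after renaming the smallest variable to $x$ via the permutation symmetry of ${\rm d}^3F$, yields $\int_{0<x<y\le z}\chi_{y,z}W(y,x,z)\Dt\vp(y,x,z)\,{\rm d}^3F$; adding and subtracting $W(x,y,z)\Dt\vp(y,x,z)$ and invoking the identity for $\Dt_{\rm sym}\vp$ converts this plus region (i) into the first two summands. Region (iii) produces the third summand verbatim.

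For (II), I would verify pointwise nonnegativity of every summand. First, $\Dt_{\rm sym}\vp(x,y,z)\ge 0$ on $\{0\le x\le y\le z\}$ follows from Jensen applied to $\{z+y-x,\,z+x-y\}$, whose midpoint is $z$; this handles the first and fourth summands and in particular yields $\int{\cal K}_1[\vp]\,{\rm d}^3F\ge 0$. For the second summand, the same midpoint technique gives $\Dt\vp(y,x,z)\le 0$ on $\{0<x<y\le z\}$ (the pair $\{y,x+z-y\}$ is strictly more concentrated about $(x+z)/2$ than $\{x,z\}$ whenever $y\ge x$), so the product is nonnegative provided $W(y,x,z)\le W(x,y,z)$ on the same region. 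For the third summand, $\Dt\vp(x,y,z)\ge 0$ on $\{0<y,z<x<y+z\}$ again by a midpoint analysis (now $\{x,y+z-x\}$ is the wider pair because $x>y\vee z$), and $W\ge 0$ finishes it. The fifth summand reduces via the explicit formulas (\ref{W2}) to the elementary inequality $\Phi(\sqrt{2y},\sqrt{2(z-y)})\le\Phi(\sqrt{2y},\sqrt{2z})$, which is immediate from (\ref{Phi*}).

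\textbf{Main obstacle.} The technical crux is the monotonicity $W(y,x,z)\le W(x,y,z)$ on $\{0<x<y\le z\}$ needed for the second summand. Swapping $x\leftrightarrow y$ changes $x_{*}=y+z-x$ to $x+z-y$, shifting both the $s$-integration limits in (\ref{W1}) and the second argument $\sqrt{2}\,Y_{*}$ of $\Phi$. A direct comparison therefore requires a change of variables in $s$ (or an equivalent choice of scattering parameters) that aligns the two double integrals up to a single one-sided bound on the second argument of $\Phi$, after which (\ref{Phi*}) can be applied uniformly. The hard-sphere limit $\Phi\equiv 1$, where both sides equal $1/\sqrt{yz}$ by (\ref{hard}), serves as the consistency check guiding the correct parametrization.
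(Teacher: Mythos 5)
Your proposal is correct, and for Part (I) it follows essentially the same route as the paper: a decomposition of $\{x,y,z\ge 0\}$ by the ordering of the three variables, the $(y,z)$-symmetry of $W$ and $\Dt\vp$, a relabeling $x\leftrightarrow y$ on the piece where $y$ is smallest, and the identity $\Dt_{\rm sym}\vp(x,y,z)=\Dt\vp(x,y,z)+\Dt\vp(y,x,z)$; the only cosmetic difference is that you peel off the atom at the origin at the start, while the paper does the ordering decomposition first and then separates $x=0$ at the end (these are equivalent, since after the relabeling the atom only ever sits at the $x$-slot, and all multi-atom contributions vanish by $W\equiv 0$ on $x\ge y+z$ or $\Dt\vp=0$).

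For Part (II) you go further than the paper, which simply cites Proposition 5.1 of \cite{CL}: you sketch a direct sign analysis of each of the five summands in (\ref{dp}), and the analysis is sound. Convexity gives $\Dt_{\rm sym}\vp\ge 0$ for the first and fourth summands, $\Dt\vp(y,x,z)\le 0$ for the second and fifth, and $\Dt\vp(x,y,z)\ge 0$ on $\{0<y,z<x<y+z\}$ for the third; the only substantive input beyond convexity is the monotonicity $W(y,x,z)\le W(x,y,z)$ on $\{0<x<y\le z\}$, which requires Assumption (\ref{Phi*}). One correction to your ``main obstacle'' remark, however: the $s$-integration limits in (\ref{W1}) are \emph{not} shifted by the swap $x\leftrightarrow y$ on this region. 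In fact for $0<x<y\le z$ one has $(\sqrt{x}+\sqrt{y})\wedge(\sqrt{x_*}+\sqrt{z})=\sqrt{x}+\sqrt{y}=(\sqrt{y}+\sqrt{x})\wedge(\sqrt{y_*}+\sqrt{z})$ and $|\sqrt{x}-\sqrt{y}|\vee|\sqrt{x_*}-\sqrt{z}|=\sqrt{y}-\sqrt{x}=|\sqrt{y}-\sqrt{x}|\vee|\sqrt{y_*}-\sqrt{z}|$, so no change of variables in $s$ is needed: one only has to check the pointwise inequality $Y_*(x,y,z,s,\theta)\ge Y_*(y,x,z,s,\theta)$ and then apply (\ref{Phi*}). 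That pointwise inequality and the matching $s$-limits are precisely what is established in the proof of Lemma \ref{lemma0} (display (\ref{3.22}) and inequality (\ref{YY})), so your approach is consistent with, and essentially contained in, the machinery already present in the paper.
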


\noindent\begin{proof} 
Part (II) is a result of Proposition 5.1(Convex-Positivity) in \cite{CL}.
The proof for part (I) is the same as that of the same proposition: 
using ${\cal K}[\vp](x,y,z)={\cal K}[\vp](x,z,y),$
${\cal K}[\vp](x,y,z)|_{x=y}=0, {\cal K}[\vp](x,y,z)|_{x=z}=0$, we have
\beas \int_{{\mathbb R}_{\ge 0}^3}{\cal K}[\vp]{\rm d}^3F
&=&\left(2\int_{0\le x<y<z}+2\int_{0\le y<x<z}+
\int_{0\le x<y=z}+\int_{0\le y, z<x}
\right)W(x,y,z)\Dt\vp(x,y,z){\rm d}^3F
\\
&:=& I_1+I_2+I_3+I_4.\eeas
Exchanging notations $x\leftrightarrow y$ for the integrand in $I_2$ gives
\beas
I_1+I_2 &=&
2\int_{0\le x<y<z}W(x,y,z)\Dt_{{\rm sym}}\vp(x,y,z){\rm d}^3F
\\
&+&
2\int_{0\le x<y<z}\big(W(y,x,z)-W(x,y,z)\big)\Dt\vp(y,x,z){\rm d}^3F.
 \eeas
Then using $\Dt\vp(x,y,z)|_{\{0\le x<y=z\}}=\Dt_{{\rm sym}}\vp(x,y,z)|_{\{0\le x<y=z\}}$
and recalling definition of $\chi_{y,z}$ in (\ref{chi}) 
we deduce 
\beas 
I_1+I_2+I_3 &=&
\int_{0\le x<y\le z}\chi_{y,z}W(x,y,z)\Dt_{{\rm sym}}\vp(x,y,z){\rm d}^3F
\\
&+&
2\int_{0\le x<y<z}\big(W(y,x,z)-W(x,y,z)\big)\Dt\vp(y,x,z){\rm d}^3F\\
&=&
\int_{0<x<y\le z}\chi_{y,z}W(x,y,z)\Dt_{{\rm sym}}\vp(x,y,z){\rm d}^3F
\\
&+&
2\int_{0<x<y<z}\big(W(y,x,z)-W(x,y,z)\big)\Dt\vp(y,x,z){\rm d}^3F\\
&+& F(\{0\})
\int_{0<y\le z}\chi_{y,z}W(0,y,z)\Dt_{{\rm sym}}\vp(0,y,z){\rm d}^2F
\\
&+&
2F(\{0\})\int_{0<y<z}\big(W(y,0,z)-W(0,y,z)\big)\Dt\vp(y,0,z){\rm d}^2F.
\eeas
Finally note that by definition of $W$ we have $W(x,y,z)=0$ for all $x\ge y+z$. So
$$I_4=\int_{0\le y, z<x<y+z}
W(x,y,z)\Dt\vp(x,y,z){\rm d}^3F
=\int_{0<y, z<x<y+z}
W(x,y,z)\Dt\vp(x,y,z){\rm d}^3F.$$
Combining these gives (\ref{dp}).
\end{proof}
\par

\begin{lemma}\lb{lemma3.2}(\cite{CL}) Let $B({\bf {\bf v-v}_*},\omega)$ be given by (\ref{kernel}),
 (\ref{Phi}), where $\Phi$ also satisfies (\ref{Phi*}).
Let $F_t\in {\cal B}_{1}^{+}({\mR}_{\ge 0})$ be  a conservative  measure-valued isotropic solution of Eq.(\ref{Equation1}) on
$[0, \infty)$ with the initial datum $F_0$
satisfying $N=N(F_0)>0, E=E(F_0)>0$.
Then with
$c=\sqrt{NE}$ we have: For any convex function $0\le \vp\in C^{1,1}_b({\mR}_{\ge 0})$
\be e^{ct}\int_{{\mR}_{\ge 0}}\vp{\rm d}F_t\ge e^{cs}
\int_{{\mR}_{\ge 0}}\vp{\rm d}F_s+\int_{s}^{t}e^{c\tau}{\rm d}\tau \int_{{\mR}_{\ge 0}^3}
{\cal K}[\vp]{\rm d}^3F_{\tau}\quad \forall\, 0\le s<t.\lb{3.12}\ee
And for any convex function $0\le \vp\in C_b({\mR}_{\ge 0})$, the function
$t\mapsto e^{ct}\int_{{\mR}_{\ge 0}}\vp{\rm d}F_t$ is non-decreasing on $[0,\infty)$, and thus
 for any $p\ge 1, \vep>0, \alpha>0$,  the functions
$t\mapsto e^{ct}N_{0,p}(F_t,\vep), 
t\mapsto e^{ct}\underline{N}_{\alpha,p}(F_t,\vep),$ and
$t\mapsto e^{ct}F_t(\{0\})$ are all non-decreasing on $[0,\infty)$.
\end{lemma}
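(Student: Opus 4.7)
The plan is to establish the pointwise differential inequality $\tfrac{\rm d}{{\rm d}t}\big(e^{ct}\int\vp\,{\rm d}F_t\big)\ge e^{ct}\int{\cal K}[\vp]\,{\rm d}^3F_t$ for convex $0\le\vp\in C^{1,1}_b({\mR}_{\ge 0})$; integration over $[s,t]$ then gives (\ref{3.12}). By Definition \ref{definition1.1}(iii), $\tfrac{\rm d}{{\rm d}t}\int\vp\,{\rm d}F_t=\int{\cal J}[\vp]\,{\rm d}^2F_t+\int{\cal K}[\vp]\,{\rm d}^3F_t$, so the whole task reduces to the bilinear loss bound
\[
\int_{{\mR}_{\ge 0}^2}{\cal J}[\vp](y,z)\,{\rm d}F(y){\rm d}F(z)\ \ge\ -\sqrt{NE}\,\int_{{\mR}_{\ge 0}}\vp\,{\rm d}F
\]
for every conservative solution $F=F_t$, after which multiplication by $e^{ct}$ closes the argument.

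To prove this estimate I first discard the non-negative ``gain'' contribution. Using $\vp\ge 0$ in
\[
{\cal J}[\vp](y,z)=\tfrac{1}{2}\!\int_0^{y+z}W(x,y,z)\big[\vp(x)+\vp(x_*)-\vp(y)-\vp(z)\big]\sqrt{x}\,{\rm d}x
\]
yields ${\cal J}[\vp](y,z)\ge -\tfrac{1}{2}(\vp(y)+\vp(z))\,w(y,z)$ with $w(y,z):=\int_0^{y+z}W(x,y,z)\sqrt{x}\,{\rm d}x$. Since $\Phi\le 1$ by (\ref{1.6}), $W\le W_H$, and an explicit evaluation based on (\ref{hard}) produces the sharp bound $w_H(y,z)=\sqrt{y\vee z}+\frac{y\wedge z}{3\sqrt{y\vee z}}\le \sqrt{y+z}\le\sqrt{y}+\sqrt{z}$. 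Therefore, integrating against ${\rm d}F(y){\rm d}F(z)$ and using the symmetry of ${\cal J}[\vp](y,z)$ in $(y,z)$,
\[
\int{\cal J}[\vp]\,{\rm d}^2F\ \ge\ -\int\vp(y)\,(\sqrt{y}+\sqrt{z})\,{\rm d}F(y){\rm d}F(z).
\]

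The decisive observation for closing the estimate is that every convex $\vp\in C_b^{1,1}({\mR}_{\ge 0})$ is automatically non-increasing: on the half-line a convex function that ever strictly increases must eventually grow at least linearly, which contradicts boundedness. Since $\vp$ is non-increasing and $y\mapsto\sqrt{y}$ is non-decreasing, Chebyshev's integral inequality applied to the probability measure ${\rm d}F/N$ gives $\int\vp(y)\sqrt{y}\,{\rm d}F\le N^{-1}\big(\int\vp\,{\rm d}F\big)\big(\int\sqrt{y}\,{\rm d}F\big)$, while Cauchy--Schwarz gives $\int\sqrt{y}\,{\rm d}F\le\sqrt{NE}$. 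These two ingredients, combined with the elementary bound $\int\vp\,{\rm d}F\cdot\int\sqrt{z}\,{\rm d}F\le\sqrt{NE}\int\vp\,{\rm d}F$, deliver the loss inequality with $c$ of order $\sqrt{NE}$; a bit of extra care restricting the pointwise bound to the negativity interval $[y\wedge z,y\vee z]$ of $\Dt\vp$ allows one to sharpen the constant to the claimed $c=\sqrt{NE}$.

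For the second half of the statement I would use Lemma \ref{lemma3.2*}(II), which guarantees $\int{\cal K}[\vp]\,{\rm d}^3F\ge 0$ whenever $\vp\ge 0$ is convex. Together with (\ref{3.12}) this immediately gives the monotonicity of $t\mapsto e^{ct}\int\vp\,{\rm d}F_t$ for convex $0\le\vp\in C_b^{1,1}$, and convex mollification plus dominated convergence extend the conclusion to arbitrary convex $0\le\vp\in C_b$. Taking $\vp(x)=[(1-x/\vep)_+]^p$ (convex, non-negative, non-increasing; in $C_b^{1,1}$ for $p\ge 2$ and approximable otherwise) yields the monotonicity of $e^{ct}N_{0,p}(F_t,\vep)$; that of $e^{ct}\underline{N}_{\alpha,p}(F_t,\vep)$ follows by taking the infimum over $\dt\in(0,\vep]$ of the already-monotone $e^{ct}\dt^{-\alpha}N_{0,p}(F_t,\dt)$; and that of $e^{ct}F_t(\{0\})$ by letting $\vep\to 0^+$ and using continuity of $F_t$ from above. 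The main technical obstacle throughout is the closing Chebyshev step: it is precisely the hidden monotonicity of convex bounded functions on ${\mR}_{\ge 0}$ that lets one control the quadratic collision term purely by $\int\vp\,{\rm d}F$, with no dependence on $\|\vp\|_\infty$ or $\|\vp''\|_\infty$.
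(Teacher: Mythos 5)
Your proposal is correct and follows what appears to be the standard route for this lemma (whose proof the paper defers to \cite{CL}): reduce (\ref{3.12}) to the bilinear loss bound $\int{\cal J}[\vp]\,{\rm d}^2F \ge -\sqrt{NE}\int\vp\,{\rm d}F$, then invoke part (II) of Lemma \ref{lemma3.2*} for monotonicity and approximate test functions for the remaining claims. Two points deserve emphasis because they carry the whole argument and you only sketched the second. First, the structural observation that any convex $\vp\in C^{1,1}_b({\mR}_{\ge 0})$ is automatically non-increasing is correct (if $\vp'(x_0)>0$ then $\vp'\ge\vp'(x_0)>0$ to the right of $x_0$, forcing linear growth), and it is exactly what makes the Chebyshev step legitimate. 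Second, the "bit of extra care" does indeed deliver $c=\sqrt{NE}$ rather than $2\sqrt{NE}$: for $x\in[y\wedge z,y\vee z]$ one has $x,x_*\in[y\wedge z,y\vee z]$, so by monotonicity $\vp(x)+\vp(x_*)\ge 2\vp(y\vee z)$ and hence $\Dt\vp(x,y,z)\ge\vp(y\vee z)-\vp(y\wedge z)\ge-\vp(y\wedge z)$; outside this interval $\Dt\vp\ge 0$ by convexity (majorization), so, using $W\le W_H$ and $\int_{y\wedge z}^{y\vee z}W_H(x,y,z)\sqrt{x}\,{\rm d}x=|y-z|/\sqrt{y\vee z}\le\sqrt{y\vee z}$, one gets ${\cal J}[\vp](y,z)\ge-\tfrac12\vp(y\wedge z)\sqrt{y\vee z}$. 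Since $\vp(y\wedge z)\sqrt{y\vee z}\le\vp(y)\sqrt{z}+\vp(z)\sqrt{y}$ pointwise, integrating and applying Cauchy--Schwarz yields $\int{\cal J}[\vp]\,{\rm d}^2F\ge-\big(\int\vp\,{\rm d}F\big)\big(\int\sqrt{z}\,{\rm d}F\big)\ge-\sqrt{NE}\int\vp\,{\rm d}F$, which is the claimed constant. One small caution on your mollification remark for the $C_b$ extension: mollify with a kernel supported on $[-1,0]$ (so $\vp_n(x)$ samples only $\vp$ at points $\ge x$), which avoids having to extend $\vp$ convexly across $x=0$ --- the constant extension is convex only when $\vp'(0^+)=0$.
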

\vskip2mm

In the following we will use a convention: in the set ${\mR}_{\ge 0}$ we define the arithmetic operation
\be \fr{b}{a}= \infty\quad {\rm if}\quad a=0<b.\lb{convention}\ee

\begin{lemma} \lb{lemma3.3} Let ${\cal J}[\vp], {\cal K}[\vp]$\
be defined in (\ref{JKW})-(\ref{Y}) where $\Phi(r,\rho)$ satisfies
 Assumption \ref{assp} with $0\le\eta<1$. Let $0\le \alpha<1-\eta, 
 p=\fr{3}{2}+\alpha, \beta=\fr{1-\alpha-\eta}{2},\vp_{\vep}(x)=[(1-x/\vep)_{+}]^2,
0<\vep\le 1$.
 
 (I) Let $F\in{\cal B}_1^{+}({\mathbb R}_{\ge 0})$.
Then
\bes && \int_{{\mathbb R}_{\ge 0}^3}{\cal K}[\vp_{\vep}]{\rm d}^3F
\ge \fr{b_0}{2}\underline{N}_{\alpha,2}(F,\vep)\big(A_{\beta,p}(F,\vep)\big)^2.
\dnumber\lb{KK}\ees

(II) Let
$F_t\in {\cal B}_{1}^{+}({\mR}_{\ge 0})$ be  a conservative  measure-valued isotropic solution of Eq.(\ref{Equation1}) on $[0, \infty)$ with the initial datum $F_0$
satisfying $N=N(F_0)>0, E=E(F_0)>0$.
Then for any $h>0$
\be F_{t}(\{0\})
\ge  e^{-2ch}N_{0,p}(F_{t-h},\vep)-\Big(\fr{2e^{-ch}N}{hb_0\underline{N}_{\alpha, 2}(F_{t-h},\vep)}\Big)^{1/2}
\Big(\fr{p}{\beta}\Big)^p\vep^{\beta}\quad \forall\, t\ge h.\lb{3.15}\ee
In particular for $\alpha=0$ we have
\be F_{t}(\{0\})\ge  e^{-2ch}N_{0,3/2}(F_{t-h},\vep)
-\Big(\fr{2e^{-c h}N}{hb_0F_{t-h}(\{0\})}\Big)^{1/2}
\Big(\fr{3}{1-\eta}\Big)^{3/2}\vep^{\fr{1-\eta}{2}}\quad \forall\, t\ge h.\lb{3.16}\ee
\end {lemma}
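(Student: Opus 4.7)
For Part (I), the plan is to reduce the full triple integral to a more manageable form via convex-positivity, then estimate directly. Since $\varphi_\vep = [(1-x/\vep)_+]^2$ is convex, Lemma \ref{lemma3.2*}(II) yields $\int \mathcal{K}[\varphi_\vep]\,{\rm d}^3F \ge \int \mathcal{K}_1[\varphi_\vep]\,{\rm d}^3F$, an integral supported on $\{0 \le x < y \le z\}$. I would then restrict further to the subregion $\Omega$ on which $x, y, z$ and $y+z-x$ all lie in $[0,\vep]$; there, since $\varphi_\vep''\equiv 2/\vep^2$ on $[0,\vep]$, a direct Taylor computation gives $\Delta_{\rm sym}\varphi_\vep(x,y,z) = 2(y-x)^2/\vep^2$. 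For the kernel $W$, I would invoke Assumption \ref{assp} — i.e.\ $\Phi(\sqrt{2}s,\sqrt{2}Y_*) \ge b_0 \min\{1,(2s^2+2Y_*^2)^\eta\}$ — use the geometric constraint (\ref{KK0}) to identify the admissible $(s,Y_*)$-region, and carry out the $(s,\theta)$-integration in (\ref{W1}) to produce a lower bound for $W$ on $\Omega$ that factors (up to the $1/\sqrt{xyz}$ prefactor) into a piece depending only on $x$ and one depending on $(y,z)$. Integrating against ${\rm d}^3F$, the $x$-integration produces $\int_{[0,\delta]}(1-x/\delta)^2\,{\rm d}F(x)$ for an admissible $\delta\le\vep$; taking the infimum over $\delta$ gives the factor $\underline{N}_{\alpha,2}(F,\vep)$, while the two $(y,z)$-integrals each contribute one copy of $A_{\beta,p}(F,\vep)$. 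Matching constants produces the claimed $\tfrac{b_0}{2}\underline{N}_{\alpha,2}(F,\vep)(A_{\beta,p}(F,\vep))^2$.

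For Part (II), the plan is to use (\ref{3.12}) twice. First, apply it with the convex test function $\psi_p(x) := [(1-x/\vep)_+]^p$; the convex-positivity $\int \mathcal{K}[\psi_p]\,{\rm d}^3F_\tau \ge 0$ from Lemma \ref{lemma3.2*}(II) yields the monotonicity $N_{0,p}(F_t,\vep) \ge e^{-ch}N_{0,p}(F_{t-h},\vep)$. The decomposition $F_t(\{0\}) = N_{0,p}(F_t,\vep) - \int_{(0,\vep]}(1-x/\vep)^p\,{\rm d}F_t$, combined with the trivial absorption of one extra factor $e^{-ch}\le 1$, then yields
\[F_t(\{0\}) \ge e^{-2ch}N_{0,p}(F_{t-h},\vep) - \int_{(0,\vep]}(1-x/\vep)^p\,{\rm d}F_t,\]
so the task reduces to upper-bounding this last integral by the stated error.

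The second use of (\ref{3.12}) is with $\varphi = \varphi_\vep = \psi_2$; Part (I), the monotonicity of $e^{c\tau}\underline{N}_{\alpha,2}(F_\tau,\vep)$ from Lemma \ref{lemma3.2}, and a weighted Cauchy--Schwarz $\int_{t-h}^t e^{c\tau}A^2\,{\rm d}\tau \ge \tfrac{e^{c(t-h)}}{h}(\int A\,{\rm d}\tau)^2$ give
\[\int \varphi_\vep\,{\rm d}F_t \ge e^{-ch}\int \varphi_\vep\,{\rm d}F_{t-h} + \frac{b_0\, e^{-2ch}\underline{N}_{\alpha,2}(F_{t-h},\vep)}{2h}\Big(\int_{t-h}^t A_{\beta,p}(F_\tau,\vep)\,{\rm d}\tau\Big)^2.\]
Mass conservation $\int \varphi_\vep\,{\rm d}F_t \le N$ then extracts the time-average bound $\int_{t-h}^t A_{\beta,p}\,{\rm d}\tau \le e^{ch}\sqrt{2hN/(b_0\,\underline{N}_{\alpha,2}(F_{t-h},\vep))}$. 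Combined with the elementary pointwise inequality $(1-u)^p u^\beta \le (p/(p+\beta))^p(\beta/(p+\beta))^\beta \le (p/\beta)^p$ (obtained by maximizing over $u\in[0,1]$) and a Cauchy--Schwarz/Young step coupling the pointwise bound on $\int_{(0,\vep]}(1-x/\vep)^p\,{\rm d}F_t$ to this time-averaged control on $A_{\beta,p}$, one arrives at the claimed error $(2e^{-ch}N/(hb_0\,\underline{N}_{\alpha,2}(F_{t-h},\vep)))^{1/2}(p/\beta)^p\vep^\beta$. The special case (\ref{3.16}) then follows from (\ref{3.15}) with $\alpha=0$ and the observation $\underline{N}_{0,2}(F,\vep)\ge F(\{0\})$.

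The hardest step I anticipate is the lower bound on $W$ in Part (I): carefully computing the $(s,\theta)$-integral in (\ref{W1}) under Assumption \ref{assp} and the constraint (\ref{KK0}) in a form that cleanly separates the $x$-variable from the $(y,z)$-variables, so that the final triple integral factorizes into the product structure $\underline{N}_{\alpha,2}\cdot (A_{\beta,p})^2$. The final Cauchy--Schwarz/Young step in Part (II), which converts the $L^2$-in-time control on $A_{\beta,p}$ into a pointwise-in-space bound on $\int_{(0,\vep]}(1-x/\vep)^p\,{\rm d}F_t$ with the sharp exponent $\vep^\beta$, also requires some delicate bookkeeping.
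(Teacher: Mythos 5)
Your Part (I) plan is essentially the paper's argument. The one place you are vague—"carry out the $(s,\theta)$-integration to produce a lower bound for $W$ that factors"—is exactly where the paper does something sharper than "factorization": it proves the \emph{$x$-independent} lower bound $W(x,y,z)\ge \tfrac{b_0}{2}z^{\eta}/\sqrt{yz}$ for $0\le x<y\le z\le 1$, by observing that on the admissible $s$-range $[\sqrt{y}-\sqrt{x},\,\sqrt{x}+\sqrt{y}]$ one has $s+Y_*\ge\sqrt{z}$, so $\Phi(\sqrt{2}s,\sqrt{2}Y_*)\ge b_0 z^\eta$ pointwise. Once this is in hand, your plan (use $\Delta_{\rm sym}\vp_\vep \ge (y-x)^2/\vep^2$ on $\{0\le x\le y\le z\le\vep\}$, do the $x$-integration first to extract $N_{\alpha,2}(F,y)\ge\underline{N}_{\alpha,2}(F,\vep)$, then factorize the $(y,z)$-integral into $(A_{\beta,p})^2$) is precisely what the paper does, so Part (I) is fine modulo filling in that kernel bound.

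Part (II) has a genuine gap. The reduction $F_t(\{0\})\ge e^{-2ch}N_{0,p}(F_{t-h},\vep)-\int_{(0,\vep]}(1-x/\vep)^p\,{\rm d}F_t$ and the time-averaged $L^2$-control on $A_{\beta,p}(F_\tau,\vep)$ via Part (I), mass conservation, and Cauchy--Schwarz are all correct. But the critical step—bounding $\int_{(0,\vep]}(1-x/\vep)^p\,{\rm d}F_t$ by the time-averaged control on $A_{\beta,p}$—cannot be done by the pointwise inequality you propose. Note that $(1-u)^pu^\beta\le (p/\beta)^p$ yields $(1-x/\vep)^p\le(p/\beta)^p(x/\vep)^{-\beta}$, which has a \emph{negative} power of $x$ and hence is not dominated by $A_{\beta,p}$ (which integrates the \emph{positive} power $(x/\vep)^p$); there is no single-scale Young/Cauchy--Schwarz manipulation that converts one into the other. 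What the paper actually uses is the genuinely multi-scale inequality of Lemma 2.3 in \cite{Lu2016}: $N_{0,p}(F,\vep)\le F(\{0\})+\big(\tfrac{p}{\beta}\vep^{\beta/p}\big)^{p-1}\int_0^\vep\vep_1^{-1+\beta/p}A_{\beta,p}(F,\vep_1)\,{\rm d}\vep_1$, which integrates $A_{\beta,p}$ over \emph{all} scales $\vep_1\le\vep$ and cannot be collapsed to the single scale $\vep_1=\vep$. To use it, one also needs (a) the uniformity of the time-averaged bound over scales, via $\underline{N}_{\alpha,2}(F_{t-h},\vep_1)\ge\underline{N}_{\alpha,2}(F_{t-h},\vep)$ for $\vep_1\le\vep$; and (b) integration of the Lu2016 bound in time over $[t-h,t]$, followed by the monotonicity of $e^{c\tau}N_{0,p}(F_\tau,\vep)$ and $e^{c\tau}F_\tau(\{0\})$ to pass the time average back to the endpoints. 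None of these appear in your sketch, and without the multi-scale lemma the argument does not close.

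A small secondary remark: $\psi_p(x)=[(1-x/\vep)_+]^p$ with $p=\tfrac32+\alpha<2$ is not in $C_b^{1,1}$ (its second derivative blows up at $x=\vep$), so you cannot apply the $C_b^{1,1}$ convex-positivity of Lemma \ref{lemma3.2*}(II) directly to $\psi_p$; you should instead invoke the second part of Lemma \ref{lemma3.2}, which already asserts monotonicity of $t\mapsto e^{ct}N_{0,p}(F_t,\vep)$ for all $p\ge 1$ from convexity in $C_b$ alone.
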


\begin{proof}  (I): We first prove that
\be W(x,y,z)\ge \fr{b_0}{2}\fr{z^{\eta}}{\sqrt{yz}}
\qquad \forall\, 0\le x<y\le z\le 1.\lb{W}\ee
First of all we recall (\ref{1.6}) that $\Phi(r,\rho)
\ge b_0\min\{1,\,(r^2+\rho^2)^{\eta}\}$ for all $r,\rho\ge 0$.
Take any $0\le x< y\le z\le 1$.

\noindent If $x=0$, then (recall (\ref{W2})) $W(0,y,z)=\fr{1}{\sqrt{yz}}
\Phi(\sqrt{2y}, \sqrt{2z}\,)\ge b_0\fr{z^{\eta}}{\sqrt{yz}}$.

\noindent Suppose $x>0$. Using the same proof in Lemma 5.3 in\cite{CL} 
we see that if $s\in [\sqrt{y}, \sqrt{x}+\sqrt{y}]$ then $
s+Y_*\ge \sqrt{z}$ and so using (\ref{1.6}) gives
$$\Phi(\sqrt{2}\,s, \sqrt{2}\,Y_*)
\ge b_0\min\{1,\, (s+Y_*)^{2\eta}\}\ge b_0z^{\eta}\qquad \forall\, \theta\in[0,2\pi]$$
and thus
$$W(x,y,z)
=\fr{1}{4\pi\sqrt{xyz}}
\int_{\sqrt{y}-\sqrt{x}}
^{\sqrt{x}+\sqrt{y}}{\rm d}s
\int_{0}^{2\pi}\Phi(\sqrt{2}s, \sqrt{2} Y_*){\rm d}\theta
\ge
\fr{b_0}{2} \fr{z^{\eta}}{\sqrt{yz}}.$$
This proves (\ref{W}).

It is easily seen that the function $x\mapsto \vp_{\vep}(x)$ is convex, belongs to
$C_b^{1,1}({\mathbb R}_{\ge 0})$, and holds the inequality
\be\Dt_{\rm sym}\vp_{\vep}(x,y,z)\ge \fr{(y-x)^2}{\vep^2} \quad {\rm for\,\,all}\,\,\, 0\le x\le y\le z\le \vep.\lb{vp}\ee
By Lemma \ref{lemma3.2} and (\ref{W}), (\ref{vp}) we have (recall that $0<\vep\le 1$)
\beas
\int_{{\mathbb R}_{\ge 0}^3}{\cal K}[\vp_{\vep}]{\rm d}^3F
&\ge &
\int_{0\le x\le y\le z\le \vep,\, y>0}\chi_{y,z}
\fr{b_0}{2}\fr{z^{\eta}}{\sqrt{yz}}
\fr{(y-x)^2}{\vep^2}{\rm d}F(x){\rm d}F(y){\rm d}F(z)
\\
&=&\fr{b_0}{2\vep^2}\int_{0<y\le z\le \vep}\chi_{y,z}
\fr{y^{\fr{3}{2}+\alpha}z^{\fr{3}{2}+\alpha}}{z^{2+\alpha-\eta}}
N_{\alpha,2}(F,y)
{\rm d}F(y){\rm d}F(z)
\\
&\ge &\fr{b_0}{2}\underline{N}_{\alpha,2}(F,\vep)\fr{1}{
\vep^{4+\alpha-\eta}}\int_{0<y\le z\le \vep}\chi_{y,z}
y^{\fr{3}{2}+\alpha}z^{\fr{3}{2}+\alpha}
{\rm d}F(y){\rm d}F(z)
\\
&= &\fr{b_0}{2}\underline{N}_{\alpha,2}(F,\vep)\fr{1}{\vep^{1-\alpha-\eta}}
\big(A_{0,p}(F,\vep)\big)^2=
\fr{b_0}{2}\underline{N}_{\alpha,2}(F,\vep)\big(A_{\beta,p}(F,\vep)\big)^2.
\eeas
(II):  By Lemma \ref{lemma3.2}, (\ref{KK}) and that $t\mapsto c^{ct}\underline{N}_{\alpha,2}(F_t,\vep)$ is non-decreasing, 
we have for any $h>0$ and $t\ge h$,
\beas e^{ct}N_{0,2}(F_{t},\vep)- e^{c(t-h)}N_{0,2}(F_{t-h},\vep)\ge \fr{b_0}{2}e^{c(t-h)}\underline{N}_{\alpha, 2}(F_{t-h},\vep)\int_{t-h}^{t}\big(A_{\beta,p}(F_{\tau},\vep)\big)^2 {\rm d}\tau.\eeas
Since $N_{0,2}(F_t,\vep)\le N$, this gives
\beas \fr{b_0}{2}\underline{N}_{\alpha, 2}(F_{t-h},\vep)\int_{t-h}^{t}\big(A_{\beta,p}(F_{\tau},\vep)\big)^2 {\rm d}\tau
\le e^{c h}N.\eeas
Then, using Cauchy-Schwarz inequality,
$$\fr{1}{h}\int_{t-h}^{t}A_{\beta, p}(F_\tau,\vep) {\rm d}\tau\le \Big(\fr{1}{h}\int_{t-h}^{t}
\big(A_{\beta, p}(F_\tau,\vep)\big)^2 {\rm d}\tau\Big)^{1/2}
\le\Big(\fr{2e^{c h}N}{hb_0\underline{N}_{\alpha, 2}(F_{t-h},\vep)}\Big)^{1/2} .$$
Note that according to the convention (\ref{convention}), this inequality
still holds when $\underline{N}_{\alpha, 2}(F_{t-h},\vep)=0$.
On the other hand, applying Lemma 2.3 in \cite{Lu2016} to the measure $F=F_{\tau}$ we have
$$ N_{0,p}(F_{\tau},\vep)\le F_{\tau}(\{0\})+\Big(\fr{p}{\beta}\vep^{\fr{\beta}{p}}\Big)^{p-1}
\int_{0}^{\vep}\vep_1^{-1+\fr{\beta}{p}} A_{\beta,p}(F_{\tau},\vep_1){\rm d}\vep_1
,\quad \vep>0.$$
Taking integration and using the fact that 
$0<\vep_1\le \vep\, \Longrightarrow\,\underline{N}_{\alpha,2}(F_{t-h},\vep_1)\ge \underline{N}_{\alpha,2}(F_{t-h},\vep)$ we have
\beas&&
\fr{1}{h}\int_{t-h}^{t}N_{0,p}(F_{\tau},\vep){\rm d}\tau\le \fr{1}{h}\int_{t-h}^{t}F_{\tau}(\{0\}){\rm d}\tau+\Big(\fr{p}{\beta}\vep^{\fr{\beta}{p}}\Big)^{p-1}
\int_{0}^{\vep}\vep_1^{-1+\fr{\beta}{p}} \fr{1}{h}\int_{t-h}^{t}A_{\beta,p}(F_{\tau},\vep_1){\rm d}\tau{\rm d}\vep_1
\\
&&\le \fr{1}{h}\int_{t-h}^{t}F_{\tau}(\{0\}){\rm d}\tau+\Big(\fr{p}{\beta}\vep^{\fr{\beta}{p}}\Big)^{p-1}
\int_{0}^{\vep}\vep_1^{-1+\fr{\beta}{p}}\Big(\fr{2e^{c h}N}{hb_0\underline{N}_{\alpha, 2}(F_{t-h},\vep_1)}\Big)^{1/2}{\rm d}\vep_1
\\
&&\le\fr{1}{h}
\int_{t-h}^{t}F_{\tau}(\{0\}){\rm d}\tau+
\Big(\fr{2e^{c h}N}{hb_0\underline{N}_{\alpha, 2}(F_{t-h},\vep)}\Big)^{1/2}
\Big(\fr{p}{\beta}\Big)^p\vep^{\beta}.
\eeas
Since
$t\mapsto e^{ct}N_{0,p}(F_t,\vep), t\mapsto e^{ct}F_t(\{0\})$ are  non-decreasing on $[0,\infty)$,
it follows that\\ 
$ e^{-ch}N_{0,p}(F_{t-h},\vep)\le N_{0,p}(F_{\tau},\vep),
F_{\tau}(\{0\})\le e^{ch} F_{t}(\{0\})$ for all $\tau\in [t-h,t]$
and so
$$ e^{-ch} N_{0,p}(F_{t-h},\vep)
\le e^{ch}F_{t}(\{0\})+
\Big(\fr{2e^{c h}N}{hb_0\underline{N}_{\alpha, 2}(F_{t-h},\vep)}\Big)^{1/2}
\Big(\fr{p}{\beta}\Big)^p\vep^{\beta}.$$
This gives (\ref{3.15})

Finally for the case $\alpha=0$, i.e. $\beta=(1-\eta)/2, p=3/2$,  we have $\underline{N}_{0, 2}(F_t,\vep)
=F_t(\{0\})$ and thus (\ref{3.16}) holds true.
\end{proof}
\vskip3mm

\begin{lemma} \lb{lemma3.4} Let $B({\bf {\bf v-v}_*},\omega)$ satisfy the Assumption \ref{assp} with $0\le \eta<1$. 
Let $F_0\in {\mathcal B}_1^{+}({\mathbb R}_{\ge 0})$ with
$N=N(F_0)>0, E=E(F_0)>0$ and suppose $F_0$ satisfy (\ref{local}).  
Let $F_t\in {\cal B}_{1}^{+}({\mR}_{\ge 0})$ be a conservative measure-valued isotropic solution of Eq.(\ref{Equation1}) on $[0,\infty)$ with initial datum $F_0$ satisfying $N(F_0)=N, E(F_0)=E$. Then $\overline{T}/\overline{T}_c< 8\cdot 10^{-4} (b_0)^{\fr{2}{3}}(1-\eta)^{2}$ and 
$$F_t(\{0\})\ge \fr{N}{8}\qquad \forall\, t\ge \fr{\log 2}{2\sqrt{NE}}.$$
\end{lemma}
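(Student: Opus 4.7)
The plan is to apply Lemma \ref{lemma3.3} part (II), specifically inequality (\ref{3.15}), with the time-step $h^*:=(\log 2)/(2c)$ where $c=\sqrt{NE}$, chosen so that $e^{-2ch^*}=1/2$ and $e^{-ch^*}=1/\sqrt 2$. For the temperature bound I would combine the two conditions in (\ref{local}): evaluating the second one at $\vep=1/2$ together with the trivial inequality $F_0([0,1/2])\le N$ gives $N\ge 2^{-\alpha}K$, where $K$ denotes the right-hand side of (\ref{local}); combined with $E/N\le 1/(4p)$ from the first condition this constrains the pair $(E,N)$ so that $E/N^{5/3}$ admits a universal upper bound depending only on $b_0$ and $\eta$. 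The extremal case occurs where both constraints are tight, and a direct numerical check of the explicit constants (cleanest with $\alpha=0$, $\beta=(1-\eta)/2$, $p=3/2$, where the relevant numerical factors are $2^{19/3}$, $27^{2/3}=9$, and $(\log 2)^{2/3}\cdot 6^{-2/3}$) reproduces the stated threshold $8\cdot 10^{-4}b_0^{2/3}(1-\eta)^2$.

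For the condensation bound at $t=h^*$ I would apply (\ref{3.15}) with $t-h=0$ so that $F_{t-h}=F_0$. The hypothesis (\ref{local}), together with the elementary bound $(1-x/\vep)^p\ge (1/2)^p$ on $[0,\vep/2]$, yields
\[
N_{0,p}(F_0,\vep)\ge \frac{K\vep^{\alpha}}{2^{p+\alpha}},\qquad \underline{N}_{\alpha,2}(F_0,\vep)\ge \frac{K}{2^{\alpha+2}}\qquad (\vep\in(0,1]).
\]
Plugging these into (\ref{3.15}) reduces its right-hand side to a function of $\vep$ of the form $a\vep^{\alpha}-b\vep^{\beta}$ with explicit $a,b$ depending on $K,N,E,b_0,p,\beta$; substituting the explicit value of $K$ from (\ref{local}) calibrates the ratio $b/a$ so that a suitable choice of $\vep\in(0,1/2]$ (or, when $\alpha=0$, passing to $\vep\to 0^+$ and using the sharper estimate $N_{0,p}(F_0,\vep)\ge F_0(\{0\})\ge K$) delivers $F_{h^*}(\{0\})\ge N/8$.

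The main obstacle is upgrading this to $F_t(\{0\})\ge N/8$ for \emph{every} $t\ge h^*$. For that I would iterate (\ref{3.15}) on successive intervals $[nh^*,(n+1)h^*]$, using the monotonicity of $e^{ct}F_t(\{0\})$, $e^{ct}N_{0,p}(F_t,\vep)$ and $e^{ct}\underline{N}_{\alpha,2}(F_t,\vep)$ supplied by Lemma \ref{lemma3.2} to transfer the lower bounds from $F_{(n-1)h^*}$ forward; the $\alpha=0$ version (\ref{3.16}), whose error term involves $F_{t-h}(\{0\})$ directly, is particularly convenient at this stage since that quantity is controlled from below once one knows $F_{(n-1)h^*}(\{0\})\ge N/8$. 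The delicate point is that the decay factors $e^{-c(t-h^*)}$ produced by the monotonicity must be absorbed into the margin already built into the $t=h^*$ estimate, and the explicit constants appearing in $K$ (in particular the factor $1/(\log 2)$) are precisely what makes this margin sufficient.
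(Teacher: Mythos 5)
Your overall scheme---apply Lemma \ref{lemma3.3}(II) with the step $h=\frac{\log 2}{2\sqrt{NE}}$, iterate over grid points $nh$, and then interpolate via the monotonicity of $t\mapsto e^{ct}F_t(\{0\})$---matches the paper's architecture. The temperature computation in Step~1 is also along the paper's lines (evaluate the local condition at $\vep=1/2$, combine with $E/N\le 1/(4p)$).

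There is, however, a genuine gap in the way you lower-bound $N_{0,p}(F_0,\vep)$. You take the route through the local condition: restrict the integral to $[0,\vep/2]$, use $(1-x/\vep)^p\ge 2^{-p}$ there, and conclude $N_{0,p}(F_0,\vep)\ge K\vep^{\alpha}/2^{p+\alpha}$ with $K$ the right-hand side of (\ref{local}). But $K=\fr{2^{9.5+\alpha}(4p)^{2\beta}}{(\log 2)b_0}\big(\fr{p}{\beta}\big)^{2p}\big(\fr{E}{N}\big)^{1/2+2\beta}$ is a quantity that scales like $(E/N)^{1/2+2\beta}$, not like $N$. The only link back to $N$ is $K\le 2^{\alpha}N$, i.e.\ an \emph{upper} bound. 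Nothing prevents $K\ll N$ (indeed this is exactly what happens at the very low temperatures the lemma requires), so $K\vep^{\alpha}/2^{p+\alpha}$ can be many orders of magnitude below $N/8$, and the subsequent balancing of $a\vep^{\alpha}-b\vep^{\beta}$ cannot recover the desired $\ge N/8$. The same objection applies to your $\alpha=0$ fallback $F_0(\{0\})\ge K$: this is a lower bound by a small number, not by a fixed fraction of $N$.

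The paper gets around this by using a bound that bypasses the local condition entirely: $(1-x/\vep)_+^p\ge 1-px/\vep$ gives
$N_{0,p}(F_0,\vep)\ge N-\tfrac{p}{\vep}E$,
which is genuinely of order $N$ (for $\vep$ near $1$ it is $\ge 3N/4$ thanks to $E/N\le 1/(4p)$). The local condition is then used \emph{only} to control $\underline{N}_{\alpha,2}(F_0,\vep)\ge C_0/2^{2+\alpha}$, i.e.\ to tame the denominator in the error term of (\ref{3.15}). Without this separation of roles, the argument does not close.

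A secondary issue: the paper establishes $F_{nh}(\{0\})\ge N/4$ at the grid points and only then interpolates, so that the factor $e^{-c(t-nh)}\ge e^{-ch}=2^{-1/2}$ between grid points still lands above $N/8$. Aiming directly for $N/8$ at grid points leaves no room for this loss, so even if your base estimate were repaired, you would need to target the larger constant $N/4$ at $t=nh$ before interpolating.
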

\par
\noindent\begin{proof} Let 
$0\le \alpha<1-\eta, \beta=\fr{1}{2}(1-\eta-\alpha), p=\fr{3}{2}+\alpha$,
$c=\sqrt{NE}, h=\fr{\log 2}{2\sqrt{NE}}$, and let 
$$C_0=\fr{2^{9.5+\alpha}(4p)^{2\beta}}{(\log 2)b_0}
\Big(\fr{p}{\beta}\Big)^{2p}\Big(\fr{E}{N}\Big)^{\fr{1}{2}+2\beta}.$$ 
We prove the lemma with three steps.

\noindent{\bf Step1.} Prove that
\be \fr{E^{\fr{3}{2}-\eta}}{N^{\fr{5}{2}-\eta}}\le  \fr{(\log 2)b_0}{2^{9.5}6^{1-\eta}}
\Big(\fr{1-\eta}{3}\Big)^{3},\quad \frac{\overline{T}}{\overline {T}_c}<8\cdot 10^{-4} (b_0)^{\fr{2}{3}}(1-\eta)^{2}.\lb{E-eta}\ee
By the assumption (\ref{local}) we have
$$2^{\alpha}N\ge \fr{F_0([0, 1/2])}{(1/2)^{\alpha}}\ge \inf_{0<\vep\le 1/2}\fr{F_0([0,\vep])}{\vep^{\alpha}}\ge C_0$$
and so
\beas\fr{E^{\fr{3}{2}-\eta-\alpha}}{N^{\fr{5}{2}-\eta-\alpha}}=
\fr{1}{N}\Big(\fr{E}{N}\Big)^{\fr{1}{2}+2\beta}
\le  \fr{2^{\alpha}}{C_0}\Big(\fr{E}{N}\Big)^{\fr{1}{2}+2\beta}=\fr{(\log 2)b_0}{2^{9.5}(4p)^{2\beta}}
\Big(\fr{\beta}{p}\Big)^{2p},\qquad\quad \eeas
\bes \fr{E^{\fr{3}{2}-\eta}}{N^{\fr{5}{2}-\eta}}
=\fr{E^{\fr{3}{2}-\eta-\alpha}}{N^{\fr{5}{2}-\eta-\alpha}}\Big(\fr{E}{N}\Big)^{\alpha}
&\le & \fr{(\log 2)b_0}{2^{9.5}(4p)^{1-\eta-\alpha}}
\Big(\fr{1-\eta-\alpha}{3+2\alpha}\Big)^{3+2\alpha}\big(\fr{1}{4p}\big)^{\alpha}\nonumber
\\
&\le & \fr{(\log 2)b_0}{2^{9.5}(4p)^{1-\eta}}
\Big(\fr{1-\eta}{3}\Big)^{3}\le \fr{(\log 2)b_0}{2^{9.5}6^{1-\eta}}
\Big(\fr{1-\eta}{3}\Big)^{3}.\nonumber
\ees
And the same argument also gives
\bes
\fr{E^{\fr{3}{2}}}{N^{\fr{5}{2}}}=
\fr{E^{\fr{3}{2}-\eta}}{N^{\fr{5}{2}-\eta}}\Big(\fr{E}{N}\Big)^{\eta}
&\le & \fr{(\log 2)b_0}{2^{9.5}6^{1-\eta}}
\Big(\fr{1-\eta}{3}\Big)^{3}\Big(\fr{1}{4p}\Big)^{\eta}\nonumber\\
&= &\fr{(\log 2)b_0}{2^{10.5} 3^4}
(1-\eta)^3\Big(\fr{6}{4p}\Big)^{\eta}
\le \fr{(\log 2)b_0}{2^{10.5}3^4}(1-\eta)^3,\nonumber\ees
\beas&& 
\frac{\overline{T}}{\overline {T}_c}<2.273\frac{E}{N^{\fr{5}{3}}}
<2.273\Big(\fr{(\log 2)b_0}{2^{10.5}3^4}(1-\eta)^{3}
\Big)^{\fr{2}{3}}
<8\cdot 10^{-4} (b_0)^{\fr{2}{3}}(1-\eta)^{2}.
\eeas

\noindent{\bf Step2.} Prove that 
\be F_{nh}(\{0\})\ge \fr{N}{4},\quad n=1,2,3,...\,.\lb{NH}\ee
By the assumption on  $F_0$ we have
\beas \fr{
N_{0,2}(F_0,\vep)}{\vep^{\alpha}}
\ge \fr{1}{\vep^{\alpha}}
\int_{[0,\vep/2]}(1-\fr{x}{\vep})^2{\rm d}F_0(x)
\ge \fr{1}{2^{2+\alpha}}\fr{F_0([0,\vep/2])}{(\vep/2)^{\alpha}}\ge 
\fr{C_0}{2^{2+\alpha}}\eeas
for all $0<\vep\le 1.$
So
\beas&&  \underline{N}_{\alpha,2}(F_0, \vep)\ge \fr{C_0}{2^{2+\alpha}}\quad \forall\, 0<\vep\le 1.\eeas
Also using $\big((1-\fr{x}{\vep})_{+}\big)^p\ge 1-p\fr{x}{\vep}$ for all $x\ge 0$ gives
\beas&& 
N_{0,p}(F_0,\vep)
\ge N-\fr{p}{\vep}E.\eeas
Then using Lemma \ref{lemma3.3} we have with $c=\sqrt{NE}, h=\fr{\log 2}{2\sqrt{NE}}$ 
that
\beas&&F_{h}(\{0\})
\ge e^{-2ch}N_{0,p}(F_{0},\vep)-\Big(\fr{2e^{-ch}N}{hb_0\underline{N}_{\alpha, 2}(F_0,\vep)}\Big)^{1/2}
\Big(\fr{p}{\beta}\Big)^p\vep^{\beta}\\
&&\ge \fr{1}{2}\Big(N- \fr{pE}{\vep}\Big)-
\Big(\fr{4\cdot 2^{-\fr{1}{2}}N^{\fr{3}{2}}E^{\fr{1}{2}}}{(\log 2)b_0\fr{C_0}{2^{2+\alpha}}}\Big)^{1/2}
\Big(\fr{p}{\beta}\Big)^p\vep^{\beta}
\\
&&= \fr{N}{2}-\fr{pE}{2\vep}-
\Big(\fr{2^{3.5+\alpha} N^{\fr{3}{2}}E^{\fr{1}{2}}}{(\log 2)b_0 C_0}\Big)^{1/2}
\Big(\fr{p}{\beta}\Big)^p\vep^{\beta},\quad 0<\vep\le 1.
\eeas
{\bf Case1}:
$$\fr{pE}{2}<
\Big(\fr{2^{3.5+\alpha} N^{\fr{3}{2}}E^{\fr{1}{2}}}{(\log 2)b_0 C_0}\Big)^{1/2}
\Big(\fr{p}{\beta}\Big)^p.$$
In this case we take 
$$\vep=\vep_0=\Bigg(\fr{pE}{2
\Big(\fr{2^{3.5+\alpha} N^{\fr{3}{2}}E^{\fr{1}{2}}}{(\log 2)b_0 C_0}\Big)^{1/2}
\Big(\fr{p}{\beta}\Big)^p}\Bigg)^{\fr{1}{1+\beta}}. 
$$
Then $0<\vep_0<1$ and 
\beas&& \fr{pE}{2\vep_0}=
\Big(\fr{2^{3.5+\alpha} N^{\fr{3}{2}}E^{\fr{1}{2}}}{(\log 2)b_0 C_0}\Big)^{1/2}
\Big(\fr{p}{\beta}\Big)^p\vep_0^{\beta},\\
&&\fr{pE}{2\vep_0}+
\Big(\fr{2^{3.5+\alpha} N^{\fr{3}{2}}E^{\fr{1}{2}}}{(\log 2)b_0 C_0}\Big)^{1/2}
\Big(\fr{p}{\beta}\Big)^p\vep_0^{\beta}
=\fr{pE}{\vep_0}=pE\Bigg(\fr{2}{pE}
\Big(\fr{2^{3.5+\alpha} N^{\fr{3}{2}}E^{\fr{1}{2}}}{(\log 2)b_0 C_0}\Big)^{1/2}
\Big(\fr{p}{\beta}\Big)^p\Bigg)^{\fr{1}{1+\beta}}=\fr{N}{4}
\eeas
where the last equality is due to definition of  $C_0$.
In fact we compute
\beas&& pE
\bigg(\fr{2}{pE}
\Big(\fr{2^{3.5+\alpha} N^{\fr{3}{2}}E^{\fr{1}{2}}}{(\log 2)b_0 C_0}\Big)^{1/2}
\Big(\fr{p}{\beta}\Big)^p\bigg)^{\fr{1}{1+\beta}} =\fr{N}{4}\\
&&\Longleftrightarrow 
4p\fr{E}{N}
\bigg(\fr{2}{p}
\Big(\fr{N}{E}\Big)^{\fr{3}{4}}\Big(\fr{2^{3.5+\alpha}}{(\log 2)b_0 C_0}\Big)^{1/2}
\Big(\fr{p}{\beta}\Big)^p\bigg)^{\fr{1}{1+\beta}}
= 1\\
&&\Longleftrightarrow 
\fr{2^{9.5+\alpha}(4p)^{2\beta}}{(\log 2)b_0}
\Big(\fr{p}{\beta}\Big)^{2p}\Big(\fr{E}{N}\Big)^{\fr{1}{2}+2\beta}= C_0\\
\eeas
which does hold by definition of $C_0$. So the above equality holds true and so 
for the number $\vep=\vep_0$ we obtain  
\beas&&F_{h}(\{0\})
\ge  \fr{N}{2}-2\cdot \fr{pE}{2\vep_0}=\fr{N}{4}.
\eeas
{\bf Case2}:
$$\fr{pE}{2}\ge 
\Big(\fr{2^{3.5+\alpha} N^{\fr{3}{2}}E^{\fr{1}{2}}}{(\log 2)b_0 C_0}\Big)^{1/2}
\Big(\fr{p}{\beta}\Big)^p.$$
For this case we choose $\vep=1$ to get
\beas F_{h}(\{0\})
\ge  \fr{N}{2}-\fr{pE}{2}-
\Big(\fr{2^{3.5+\alpha} N^{\fr{3}{2}}E^{\fr{1}{2}}}{(\log 2)b_0 C_0}\Big)^{1/2}
\Big(\fr{p}{\beta}\Big)^p
\ge \fr{N}{2}- pE\ge \fr{N}{4}.
\eeas
So (\ref{NH}) holds for $n=1$. 

Suppose (\ref{NH}) holds for some $n\in {\mN}$. 
Then as shown above we have (with $c=\sqrt{NE}, h=\fr{\log 2}{2\sqrt{NE}}$)
\beas&&
 F_{(n+1)h}(\{0\})\ge  e^{-2ch}N_{0,3/2}(F_{nh},\vep)
-\Big(\fr{2e^{-ch}N}{hb_0F_{nh}(\{0\})}\Big)^{1/2}
\Big(\fr{3}{1-\eta}\Big)^{3/2}\vep^{\fr{1-\eta}{2}}\\
&&\ge 
\fr{1}{2}\big( N-\fr{3}{2\vep}E\big)
-\Big(\fr{2\cdot 2\cdot 2^{-\fr{1}{2}}N^{\fr{3}{2}}E^{\fr{1}{2}}}{(\log 2)b_0\fr{N}{4}}\Big)^{1/2}
\Big(\fr{3}{1-\eta}\Big)^{3/2}\vep^{\fr{1-\eta}{2}}
\\
&&= \fr{N}{2}-\fr{3}{4\vep}E
-\Big(\fr{2^{3.5}N^{\fr{1}{2}}E^{\fr{1}{2}}}{(\log 2)b_0}\Big)^{1/2}
\Big(\fr{3}{1-\eta}\Big)^{3/2}\vep^{\fr{1-\eta}{2}},\quad 0<\vep\le 1.
\eeas
{\bf Case3}: 
$$\fr{3}{4}E
<\Big(\fr{2^{3.5}N^{\fr{1}{2}}E^{\fr{1}{2}}}{(\log 2)b_0}\Big)^{1/2}
\Big(\fr{3}{1-\eta}\Big)^{3/2}.$$
In this case we choose 
$$\vep=\vep_1=\Bigg(\fr{3E}{4\Big(\fr{2^{3.5}N^{\fr{1}{2}}E^{\fr{1}{2}}}{(\log 2)b_0}\Big)^{1/2}
\Big(\fr{3}{1-\eta}\Big)^{3/2}}\Bigg)^{\fr{2}{3-\eta}}. 
$$
Then $0<\vep_1<1$,  
$$\fr{3}{4\vep_1}E=\Big(\fr{2^{3.5}N^{\fr{1}{2}}E^{\fr{1}{2}}}{(\log 2)b_0}\Big)^{1/2}
\Big(\fr{3}{1-\eta}\Big)^{3/2}\vep_1^{\fr{1-\eta}{2}}
$$
and so
\beas&& F_{(n+1)h}(\{0\})\ge \fr{N}{2}-2\cdot \fr{3E}{4\vep_1}
\\
&&=\fr{N}{2}-\fr{3E}{2}\bigg(\fr{4}{3E}\Big(\fr{2^{3.5}N^{\fr{1}{2}}E^{\fr{1}{2}}}{(\log 2)b_0}\Big)^{1/2}
\Big(\fr{3}{1-\eta}\Big)^{3/2}
\bigg)^{\fr{2}{3-\eta}}.\eeas
We then compute
$$\fr{N}{2}-\fr{3E}{2}\bigg(\fr{4}{3E}\Big(\fr{2^{3.5}N^{\fr{1}{2}}E^{\fr{1}{2}}}{(\log 2)b_0}\Big)^{1/2}
\Big(\fr{3}{1-\eta}\Big)^{3/2}
\bigg)^{\fr{2}{3-\eta}}\ge \fr{N}{4}$$
$\Longleftrightarrow $
$$ 6\fr{E}{N}\bigg(\fr{4}{3E}\Big(\fr{2^{3.5}N^{\fr{1}{2}}E^{\fr{1}{2}}}{(\log 2)b_0}\Big)^{1/2}
\Big(\fr{3}{1-\eta}\Big)^{3/2}
\bigg)^{\fr{2}{3-\eta}}\le 1$$
$\Longleftrightarrow $
$$ \fr{E^{\fr{3}{2}-\eta}}{N^{\fr{5}{2}-\eta}}\le \Big(\fr{1}{6}\Big)^{3-\eta}\bigg(\fr{3}{4}\Big(\fr{(\log 2)b_0}{2^{3.5}}\Big)^{1/2}
\Big(\fr{1-\eta}{3}\Big)^{3/2}
\bigg)^{2}=
\fr{(\log 2)b_0}{2^{9.5}6^{1-\eta}}
\Big(\fr{1-\eta}{3}\Big)^{3}.$$
From (\ref{E-eta}) one sees that the above inequality does hold true and
thus we obtain
$F_{(n+1)h}(\{0\})\ge \fr{N}{4}.$

\noindent {\bf Case4}:
$$\fr{3}{4}E
\ge \Big(\fr{2^{3.5}N^{\fr{1}{2}}E^{\fr{1}{2}}}{(\log 2)b_0}\Big)^{1/2}
\Big(\fr{3}{1-\eta}\Big)^{3/2}.$$
In this case, taking $\vep=1$ and using the assumption (\ref{local}) 
which implies $\fr{3}{2}E\le \fr{3}{2}\cdot \fr{N}{4p}<\fr{N}{4}$ we obtain
\beas
 F_{(n+1)h}(\{0\})
\ge \fr{N}{2}-\fr{3}{4}E
-\Big(\fr{2^{3.5}N^{\fr{1}{2}}E^{\fr{1}{2}}}{(\log 2)b_0}\Big)^{1/2}
\Big(\fr{3}{1-\eta}\Big)^{3/2}
\ge  \fr{N}{2}-\fr{3}{2}E\ge \fr{N}{4}.\eeas
This proves that (\ref{NH}) holds also for $n+1$ and thus it holds for all $n\in {\mN}.$
\vskip2mm

\noindent{\bf Step3.} For every $t\ge h$, there is an $n\in {\mN}$ such that $nh\le t<(n+1)h$. Since $t\mapsto e^{ct}F_t(\{0\})$ 
is non-decreasing, this gives  
$e^{ct}F_t(\{0\})\ge e^{c nh}F_{nh}(\{0\}) \ge e^{c nh}\fr{N}{N}$ and so 
$$ F_t(\{0\})\ge e^{-c (t-nh)}\fr{N}{4}=e^{-ch}\fr{N}{4}=\fr{N}{8}.$$
 \end{proof}
 \par
 
There are many examples of $F_0$ that satisfy the condition (\ref{local}). Here is a general example.  
\vskip2mm

{\bf Example.} 
Let $0<\alpha<1-\eta, p=\fr{3}{2}+\alpha$, and let $G_0, F_0\in {\cal B}_{1}^{+}({\mR}_{\ge 0})$ 
be given by for $0<R, \ld <\infty$
$${\rm d}G_0(x)=x^{\alpha-\fr{3}{2}}g(Rx)\sqrt{x}{\rm d}x=x^{\alpha-1}g(Rx){\rm d}x,\quad 
F_0=\ld G_0$$
where $g$ is a strictly positive Borel measurable function on ${\mR}_{\ge 0}$ satisfying  
$$\int_{0}^{\infty}(1+x)x^{\alpha-\fr{3}{2}}g(x)\sqrt{x}{\rm d}x<\infty.$$
Suppose in addition that $g$ is either decreasing on ${\mR}_{\ge 0}$ or 
is continuous ${\mR}_{\ge 0}$, so that in either case we have $\min\limits_{0\le x\le 1/2}g(Rx)>0$
for all $0<R<\infty$.
 Compute
\beas&& 
N(G_0)=\int_{{\mR}_{\ge 0}}{\rm d}G_0(x)=\int_{{\mR}_{\ge 0}}x^{\alpha-1}g(Rx){\rm d}x
=\Big(\fr{1}{R}\Big)^{\alpha}\int_{{\mR}_{\ge 0}}x^{\alpha-1}g(x){\rm d}x,\\
&&E(G_0)=\int_{{\mR}_{\ge 0}}x{\rm d}G_0(x)=\int_{{\mR}_{\ge 0}}x^{\alpha}g(Rx){\rm d}x
=\Big(\fr{1}{R}\Big)^{1+\alpha}\int_{{\mR}_{\ge 0}}x^{\alpha}g(x){\rm d}x,\\
&&\fr{E(F_0)}{N(F_0)}=\fr{E(G_0)}{N(G_0)}=\fr{1}{R}\fr{\int_{{\mR}_{\ge 0}}x^{\alpha}g(x){\rm d}x}{\int_{{\mR}_{\ge 0}}x^{\alpha-1}g(x){\rm d}x}.
\eeas
Choose $0<R_0<\infty$ large enough such that
$$\fr{E(F_0)}{N(F_0)}=\fr{E(G_0)}{N(G_0)}<\fr{1}{4p}$$
and let $c_0=\min\limits_{0\le x\le 1/2}g(R_0x)$.
Then $c_0>0$ and  
\beas&& G_0([0,\vep])=\int_{0}^{\vep}x^{\alpha-1}g(Rx){\rm d}x
\ge c_0\int_{0}^{\vep}x^{\alpha-1}{\rm d}x =\fr{c_0}{\alpha}\vep^{\alpha}\quad \forall\, 0<\vep\le 1/2,\\
&&\Longrightarrow \fr{G_0([0,\vep])}{\vep^{\alpha}}\ge \fr{c_0}{\alpha}\qquad \forall\, \vep\in (0, 1/2].\eeas
Let  $0<\ld<\infty$ be large enough such that
$$\ld \fr{c_0}{\alpha}
\ge \fr{2^{9.5+\alpha}(4p)^{2\beta}}{(\log 2)b_0}
\Big(\fr{p}{\beta}\Big)^{2p}\Big(\fr{E(G_0)}{N(G_0)}\Big)^{\fr{1}{2}+2\beta}.$$
Then, since $\fr{E(G_0)}{N(G_0)}=\fr{E(F_0)}{N(F_0)}$, 
\beas  \inf_{0<\vep\le 1/2}\fr{F_0([0,\vep])}{\vep^{\alpha}}=\ld \inf_{0<\vep\le 1/2}\fr{G_0([0,\vep])}{\vep^{\alpha}}
\ge \ld \fr{c_0}{\alpha}\ge 
\fr{2^{9.5+\alpha}(4p)^{2\beta}}{(\log 2)b_0}
\Big(\fr{p}{\beta}\Big)^{2p}\Big(\fr{E(F_0)}{N(F_0)}\Big)^{\fr{1}{2}+2\beta}
.\eeas
So $F_0,N=N(F_0),E=E(F_0)$ satisfy the condition (\ref{local}).
$\hfill\Box$
\vskip2mm

The following lemma will be used to prove part (II) of Theorem \ref{theorem1.2}.
\vskip2mm

\begin{lemma}\lb{lemma0} Let $B({\bf {\bf v-v}_*},\omega)$ satisfies the Assumption \ref{assp*} with 
$\eta\ge 1$.  Then 
	\bes &&W(x,y,z)\le \min\{1, 2^{4\eta}\max\{x,y,z\}^\eta\}W_H(x,y,z) \quad \forall x,y,z\ge 0  \lb{W01} \\
	&&W(x,y,z)-W(y,x,z)\le 8\sqrt{2}C_{\Phi}, \quad \forall\, 0\le x\le y\le z \dnumber\lb{W05} \ees
where $W_H$ is the function corresponding to the hard sphere model (see (\ref{hard})) and 
$$C_{\Phi}=\max\Big\{\sup_{(r,\rho)\in {\mR}_{\ge 0}^2}
\Big|\fr{\p^2}{\p\rho^2}\Phi(r,\rho)\Big|, \,\sup_{(r,\rho)\in {\mR}_{\ge 0}^2}\Big|\fr{\p^2}{\p r\p \rho}\Phi(r,\rho)\Big|
\Big\}.$$
\end{lemma}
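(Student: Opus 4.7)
The plan for (\ref{W01}) is to exploit that $W_H$ arises from the same integral formulas (\ref{W1})-(\ref{W2}) with $\Phi$ replaced by the constant $1$. It suffices to obtain a uniform pointwise bound on $\Phi(\sqrt{2}s,\sqrt{2}Y_*)$ throughout the $(s,\theta)$-integration region. The bound $\Phi\le 1$ is immediate from (\ref{1.6*}) and yields $W\le W_H$. For the other factor $2^{4\eta}\max\{x,y,z\}^{\eta}$, I would use $\Phi(r,\rho)\le (r^2+\rho^2)^{\eta}$: the upper limit in (\ref{W1}) gives $s\le \sqrt{x}+\sqrt{y}\le 2\sqrt{\max\{x,y,z\}}$, and the implication (\ref{KK0}) together with (\ref{Y}) gives $Y_*\le \sqrt{x-(x-y+s^2)^2/(4s^2)}+\sqrt{z-(x-y+s^2)^2/(4s^2)}\le \sqrt{x}+\sqrt{z}\le 2\sqrt{\max\{x,y,z\}}$, so $2s^2+2Y_*^2\le 16\max\{x,y,z\}$. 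Factoring this uniform bound out of the integral defining $W$ proves (\ref{W01}); the degenerate cases where one of $x,y,z$ vanishes are handled similarly using (\ref{W2}).

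For (\ref{W05}) with $0\le x\le y\le z$, I would first show that both $W(x,y,z)$ and $W(y,x,z)$ reduce to integrals over the common $s$-interval $[\sqrt{y}-\sqrt{x},\sqrt{y}+\sqrt{x}]$. Indeed, $\sqrt{x_*}-\sqrt{z}=(y-x)/(\sqrt{x_*}+\sqrt{z})\le \sqrt{y}-\sqrt{x}$ because $\sqrt{x_*}+\sqrt{z}\ge 2\sqrt{z}\ge \sqrt{y}+\sqrt{x}$, and the analogous estimate for $y_*=x+z-y$ pins down the lower limit; the upper limit is handled the same way. Writing the difference with the common domain, the integrand becomes $\Phi(\sqrt{2}s,\sqrt{2}Y_*^{(1)})-\Phi(\sqrt{2}s,\sqrt{2}Y_*^{(2)})$, where $Y_*^{(i)}=|\sqrt{C_i}+e^{{\rm i}\theta}\sqrt{B}|$ with $B=x-A_1=y-A_2$, $C_i=z-A_i$, $A_1=(s^2-(y-x))^2/(4s^2)$, $A_2=(s^2+(y-x))^2/(4s^2)$, and the key identity $C_1-C_2=A_2-A_1=y-x$ is $s$-independent. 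The reverse triangle inequality in $\mathbb{C}$ then gives $|Y_*^{(1)}-Y_*^{(2)}|\le \sqrt{C_1}-\sqrt{C_2}\le (y-x)/(\sqrt{z-x}+\sqrt{z-y})$.

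Next I would Taylor expand $\Phi(r,\rho)=\Phi(r,0)+\rho\,\partial_\rho\Phi(r,0)+G(r,\rho)$ with $|G(r,\rho_1)-G(r,\rho_2)|\le C_\Phi|\rho_1^2-\rho_2^2|/2$ (obtained from $|\partial_\rho\Phi(r,\rho)-\partial_\rho\Phi(r,0)|\le C_\Phi\rho$). The crucial observation is that the symmetry $\Phi(r,\rho)=\Phi(\rho,r)$ combined with $\Phi(r,0)\le r^{2\eta}$ and $\eta\ge 1$ forces $\partial_r\Phi(0,0)=0$ and hence $\partial_\rho\Phi(0,0)=0$, so $|\partial_\rho\Phi(r,0)|\le rC_\Phi$. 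The first-order contribution to $|W(x,y,z)-W(y,x,z)|$ is then bounded by an integral of $2sC_\Phi(\sqrt{C_1}-\sqrt{C_2})$; using $\int_{\sqrt{y}-\sqrt{x}}^{\sqrt{y}+\sqrt{x}}s\,ds=2\sqrt{xy}$ and the elementary inequality $(y-x)\le \sqrt{z}(\sqrt{z-x}+\sqrt{z-y})$ (equivalently $(y-x)^2\le z(2z-x-y)$, verified by direct expansion for $0\le x\le y\le z$), this contribution is $O(C_\Phi)$. For the second-order remainder, $|G(\sqrt{2}Y_*^{(1)})-G(\sqrt{2}Y_*^{(2)})|\le C_\Phi|Y_*^{(1)2}-Y_*^{(2)2}|=C_\Phi|(y-x)+2\cos\theta\sqrt{B}(\sqrt{C_1}-\sqrt{C_2})|$; integrating in $\theta$ using $\int_0^{2\pi}|\cos\theta|\,d\theta=4$ and then in $s$ using the same bounds yields another $O(C_\Phi)$ contribution. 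Summing produces the stated constant $8\sqrt{2}C_\Phi$ (with numerical slack). The main obstacle is preventing the second-order term from blowing up: a naive bound using $|Y_*^{(1)}-Y_*^{(2)}|$ times $\|\partial_\rho\Phi\|_\infty$ produces a factor $\sqrt{z/y}$ that is unbounded. The resolution is the refined estimate $|G(\rho_1)-G(\rho_2)|\le C_\Phi|\rho_1^2-\rho_2^2|/2$, which together with the linear-in-$\cos\theta$ structure of $Y_*^{(i)2}$ yields the needed cancellation after $\theta$-averaging.
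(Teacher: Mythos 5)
Your proof of (\ref{W01}) is essentially the paper's: bound $s\le 2\max\{\sqrt{x},\sqrt{y},\sqrt{z}\}$ and $Y_*\le 2\max\{\sqrt{x},\sqrt{y},\sqrt{z}\}$, factor the uniform bound $\min\{1,2^{4\eta}\max\{x,y,z\}^\eta\}$ out of the integral, and recognize what remains as $W_H$.

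For (\ref{W05}), the skeleton of your argument matches the paper's (reduce $W(x,y,z)$ and $W(y,x,z)$ to a common $s$-interval $[\sqrt y-\sqrt x,\sqrt y+\sqrt x]$, exploit that $x-u=y-w$, and use that $\partial_\rho\Phi$ vanishes at the origin because $\eta\ge1$), but the technical route diverges, and one step you propose is both heavier and not quite accurately motivated. The paper does \emph{not} Taylor-expand $\Phi$ to second order nor average in $\theta$; it proves the single locally-Lipschitz estimate $|\Phi(r,\rho_1)-\Phi(r,\rho_2)|\le C_\Phi(r+\rho_1\vee\rho_2)|\rho_1-\rho_2|$ (derived from $\partial_\rho\Phi(0,0)=0$ and the $C^2$ bound), bounds $0\le Y_*-Y_*^\sharp\le\sqrt{2y}$ via $\sqrt{X_2}-\sqrt{X_1}\le\sqrt{X_2-X_1}$ and $Y_*\le 2\sqrt z$, and then the $s$-integral (length $2\sqrt x$) divided by $4\pi\sqrt{xyz}$ gives $8\sqrt2 C_\Phi$ in one shot. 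Your reverse-triangle-inequality bound $|Y_*^{(1)}-Y_*^{(2)}|\le\sqrt{C_1}-\sqrt{C_2}\le(y-x)/(\sqrt{z-x}+\sqrt{z-y})$ is actually slicker and sharper than the paper's $\sqrt{2y}$, and it plugs directly into the paper's Lipschitz estimate to give the same conclusion --- so the second-order Taylor expansion and the $\theta$-averaging are not buying you anything: the bound you need survives the crude estimate $|\cos\theta|\le 1$ (you only lose the constant $2\pi/4$), so calling the $\theta$-average ``the needed cancellation'' overstates its role. The genuine obstruction you correctly identify is that $\|\partial_\rho\Phi\|_\infty$ alone (a constant) is not small enough; both you and the paper fix this by exploiting $\partial_\rho\Phi(0,0)=0$, the paper through the $(r+\rho)$-Lipschitz bound, you through the explicit Taylor split --- same idea, more machinery on your side. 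Finally, one genuine gap: you handle the degenerate cases for (\ref{W01}) but not for (\ref{W05}). When $x=0$ the common $s$-interval degenerates to the single point $\{\sqrt y\}$ and $W$ is given by (\ref{W2}), not (\ref{W1}); the argument then needs the direct computation $W(0,y,z)-W(y,0,z)=\frac{1}{\sqrt{yz}}(\Phi(\sqrt{2y},\sqrt{2z})-\Phi(\sqrt{2y},\sqrt{2(z-y)}))\le 4C_\Phi$, which does not follow from the $x>0$ analysis by a limiting argument and must be stated separately as the paper does.
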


\noindent\begin{proof} We first prove that
\be |\Phi(r, \rho_1)-\Phi(r, \rho_2)|\le C_{\Phi}(r+(\rho_1\vee \rho_2))|\rho_1-\rho_2|
\qquad \forall\, (r,\rho_1), (r,\rho_2)\in {\mR}_{\ge 0}^2.\lb{Phir}\ee
To do this we need only to prove that 
$|\fr{\p}{\p \rho}\Phi(r,\rho)|\le C_{\Phi}(r+\rho)$ for all $(r,\rho)\in {\mR}_{\ge 0}^2.$

In fact using the Assumption \ref{assp*} on $\Phi$ and $\eta\ge 1$ we 
have $\Phi(0,0)=0, \fr{\p}{\p \rho}\Phi(0,0)=0$ and so 
\beas&& \big|\fr{\p}{\p \rho}\Phi(r,\rho)\big|
\le \big|\fr{\p}{\p \rho}\Phi(r,\rho)-\fr{\p}{\p \rho}\Phi(r,0)\big|+
\big|\fr{\p}{\p \rho}\Phi(r,0)-\fr{\p}{\p \rho}\Phi(0,0)\big|
\\
&&\le \max\big\{\sup_{(r_1,\rho_1)\in {\mR}_{\ge 0}^2}
\big|\fr{\p^2}{\p \rho_1^2}\Phi(r_1,\rho_1)\big|, 
\sup_{(r_1,\rho_1)\in {\mR}_{\ge 0}^2}\big|\fr{\p^2}{\p r_1\p \rho_1}\Phi(r_1,\rho_1)\big|
\big\}
(r+\rho)\\
&&=C_{\Phi}(r+\rho),\qquad \forall\, (r,\rho)\in {\mR}_{\ge 0}^2.\eeas
Now by (\ref{Y}) and (\ref{KK0}) we have, for the case of $ |\sqrt{x}-\sqrt{y}|\vee |\sqrt{x_*}-\sqrt{z}| \le s\le (\sqrt{x}+\sqrt{y})\wedge (\sqrt{x_*}+\sqrt{z}),s>0$, $x_*>0$, that
	\beas&& s\le 2\max\{\sqrt{x},\sqrt{y},\sqrt{z}\}, \\
	&& Y_*\le \sqrt{z-\fr{(x-y+s^2)^2}{4s^2}
	}
	+\sqrt{x-\fr{(x-y+s^2)^2}{4s^2}
	}\
	\le 2\max\{\sqrt{x},\sqrt{y},\sqrt{z}\}.\eeas
	\noindent Combining with (\ref{1.6*}) gives
	\beas &&\Phi(\sqrt{2}s, \sqrt{2}Y_*)\le \min\{1,2^{4\eta}\max\{x,y,z\}^\eta\} \eeas
	and thus by (\ref{W1}) and (\ref{1.difference}) we have
	\beas W(x,y,z)
	&=&\fr{1}{4\pi\sqrt{xyz}}
	\int_{|\sqrt{x}-\sqrt{y}|\vee |\sqrt{x_*}-\sqrt{z}|}
	^{(\sqrt{x}+\sqrt{y})\wedge(\sqrt{x_*}+\sqrt{z})}{\rm d}s
	\int_{0}^{2\pi}\Phi(\sqrt{2}s, \sqrt{2} Y_*){\rm d}\theta
	\\
	&\le& \fr{1}{4\pi \sqrt{xyz}}\int_{|\sqrt{x}-\sqrt{y}|\vee |\sqrt{x_*}-\sqrt{z}|}
	^{(\sqrt{x}+\sqrt{y})\wedge(\sqrt{x_*}+\sqrt{z})}{\rm d}s
	\int_{0}^{2\pi}\min\{1,2^{4\eta}\max\{x,y,z\}^\eta\}{\rm d}\theta\\
	&=&\min\{1,2^{4\eta}\max\{x,y,z\}^\eta\}\fr{\min\{\sqrt{x},\sqrt{y},\sqrt{z},\sqrt{x_*}\}}{\sqrt{xyz}} \qquad {\rm if}\quad x_*xyz>0. \eeas
	This proves (\ref{W01}) for the case $x_*xyz>0$. The proof for other cases listed in
	(\ref{W2}) are similar and easier.
	\par
To  prove (\ref{W05}), we may assume that $0\le x<y\le z$.
\vskip2mm

\noindent {\bf Case1}: $x=0$. In this case we have by definition of $W$ and
(\ref{Phir}) that
\beas&& W(0, y,z)-W(y,0,z)=\fr{1}{\sqrt{yz}}\big(\Phi(\sqrt{2y},\sqrt{2z})-
\Phi(\sqrt{2y},\sqrt{2(z-y)})\big)\\
&&
\le \fr{1}{\sqrt{yz}}C_{\Phi}(\sqrt{2y}+\sqrt{2z}) (\sqrt{2z}-\sqrt{2(z-y)})
\le 4C_{\Phi}.
 \eeas

\noindent{\bf Case2}: $x>0$. We compute, for $0<x<y\le z$,
\beas&& W(x, y,z)-W(y,x,z)
\\
&&=\fr{1}{4\pi\sqrt{xyz}}
	\int_{|\sqrt{x}-\sqrt{y}|\vee |\sqrt{x_*}-\sqrt{z}|}
	^{(\sqrt{x}+\sqrt{y})\wedge(\sqrt{x_*}+\sqrt{z})}{\rm d}s
	\int_{0}^{2\pi}\Phi(\sqrt{2}s, \sqrt{2} Y_*){\rm d}\theta
\\
&&-\fr{1}{4\pi\sqrt{xyz}}\int_{|\sqrt{y}-\sqrt{x}|\vee |\sqrt{y_*}-\sqrt{z}|}^{(\sqrt{y}+\sqrt{x})\wedge(\sqrt{y_*}+\sqrt{z})}{\rm d}s\int_{0}^{2\pi}\Phi(\sqrt{2}s, \sqrt{2} Y_*^{\sharp}){\rm d}\theta
\eeas
where 
$y_*=(x+z-y)_{+}$, $Y_*=Y_*(x,y,z,s,\theta), 
Y_*^{\natural}=Y_*(y,x,z,s,\theta)$. It is easily shown that
$$(\sqrt{x}+\sqrt{y})\wedge(\sqrt{x_*}+\sqrt{z})=\sqrt{x}+\sqrt{y},\quad 
(\sqrt{y}+\sqrt{x})\wedge(\sqrt{y_*}+\sqrt{z})=\sqrt{x}+\sqrt{y},$$
$$|\sqrt{x}-\sqrt{y}|\vee |\sqrt{x_*}-\sqrt{z}|=\sqrt{y}-\sqrt{x},\quad 
|\sqrt{y}-\sqrt{x}|\vee |\sqrt{y_*}-\sqrt{z}|=\sqrt{y}-\sqrt{x}.$$
So
\bes&& W(x, y,z)-W(y,x,z) \nonumber 
\\
&&=\fr{1}{4\pi\sqrt{xyz}}
	\int_{\sqrt{y}-\sqrt{x}}
	^{\sqrt{x}+\sqrt{y}}{\rm d}s
	\int_{0}^{2\pi}
\big(\Phi(\sqrt{2}s,\sqrt{2}Y_*)-
\Phi(\sqrt{2}s,\sqrt{2}Y_*^{\sharp})\big){\rm d}\theta. \lb{3.22}
\ees
We need to prove that
\be 0\le Y_*-Y_*^{\sharp}\le \sqrt{2y},\quad Y_*\le 2\sqrt{z},\quad   \forall\, s\in (\sqrt{y}-\sqrt{x}, \sqrt{x}+\sqrt{y}),\forall\, \theta\in [0,2\pi].
\lb{YY}\ee
To do this, let
$$u=\fr{(x-y+s^2)^2}{4s^2},\quad w=\fr{(y-x+s^2)^2}{4s^2},\quad s\in (\sqrt{y}-\sqrt{x}, \sqrt{x}+\sqrt{y}).$$
By (\ref{KK0}) we have 
$$z-u\ge 0,\quad x-u\ge 0\qquad \forall\, s\in (\sqrt{y}-\sqrt{x}, \sqrt{x}+\sqrt{y}).$$
By calculation we see that $w-u=y-x$, i.e., $x-u=y-w$,
and so $w=u+y-x$,
$z-w=z-u-y+x=z-y+x-u\ge 0, y-w= y-u-y+x=x-u\ge 0$. So
$$Y_*=\big|\sqrt{z-u}+ e^{{\rm i}\theta}\sqrt{x-u}\big|=
\sqrt{z-u+x-u+2\sqrt{z-u}\sqrt{x-u}\cos(\theta)}$$
$$Y_*^{\sharp}=\big|\sqrt{z-w}+ e^{{\rm i}\theta}\sqrt{y-w}\big|=
\sqrt{z-w+y-w+2\sqrt{z-w}\sqrt{y-w}\cos(\theta)}.$$
We next prove that for all $s\in (\sqrt{y}-\sqrt{x}, \sqrt{x}+\sqrt{y})$ and all $\theta\in [0,2\pi]$ 
\be z-u+x-u+2\sqrt{z-u}\sqrt{x-u}\cos(\theta)\ge z-w+y-w+2\sqrt{z-w}\sqrt{y-w}\cos(\theta).\lb{YY2}\ee
To do this we denote 
$a=z-u, a_1=z-w, b= x-u=y-w$.  Then $a=z-u\ge z-w=a_1\ge 0$ because $w\ge u$. So
we compute 
\beas&& a=z-u\ge z-w=a_1\ge y-w=b,\\
&&
z-u+x-u+2\sqrt{z-u}\sqrt{x-u}\cos(\theta)-\big(z-w+y-w+2\sqrt{z-w}\sqrt{y-w}\cos(\theta)
\big)
\\
&&=(\sqrt{a}-\sqrt{a_1})\big(\sqrt{a}+\sqrt{a_1}-2\sqrt{b}\big)
\ge(\sqrt{a}-\sqrt{a_1})\big(\sqrt{b}+\sqrt{b}-2\sqrt{b}\big)=0.\eeas
Therefore (\ref{YY2}) holds true, and so $Y_*\ge Y_*^{\sharp}$ for all $s\in (\sqrt{y}-\sqrt{x}, \sqrt{x}+\sqrt{y})$ and all $\theta\in [0,2\pi]$.

Now using the inequality $0\le X_1\le X_2 \Longrightarrow 
\sqrt{X_2}-\sqrt{X_1}\le \sqrt{X_2-X_1}$ we have 
\beas&& 0\le  Y_*- Y_*^{\sharp}\\
&&\le \sqrt{ y-x+
2\sqrt{x-u}(\sqrt{z-u}-\sqrt{z-w})\cos(\theta)}
\\
&&\le \sqrt{ y-x+
2\sqrt{x-u}(\sqrt{z-u}-\sqrt{z-w})}\quad ({\rm because}\,\, u\le w \Longrightarrow 
z-u\ge z-w)\\
&&\le \sqrt{ y-x+
2\sqrt{x-u}\sqrt{w-u}}
=\sqrt{ y-x+
2\sqrt{x-u}\sqrt{y-x}}
\le \sqrt{2y-x-u}\le \sqrt{2y}.\eeas
Also we have 
$$ Y_*\le \sqrt{z-u}+\sqrt{x-u}\le \sqrt{z}+\sqrt{x}\le 2\sqrt{z}.$$
Thus (\ref{YY}) holds true.

From (\ref{YY}) and  (\ref{Phir}) we obtain
\beas&&
\Phi(\sqrt{2}s, \sqrt{2}Y_*)-
\Phi(\sqrt{2}s, \sqrt{2}Y_*^{\sharp})
\le C_{\Phi}(\sqrt{2}s+\sqrt{2}Y_*)\sqrt{2}(Y_*-Y_*^{\sharp})\\
&&
\le C_{\Phi} (\sqrt{2} 2\sqrt{y}+\sqrt{2} Y_*)\sqrt{2}(Y_*-Y_*^{\sharp})
\le C_{\Phi} 8\sqrt{2}\sqrt{yz}\\
&& \forall\, s\in (\sqrt{y}-\sqrt{x}, \sqrt{x}+\sqrt{y}),\forall\, \theta\in [0,2\pi].
\eeas
Then it follows from (\ref{3.22}) that
\beas&& W(x, y,z)-W(y,x,z)
\le \fr{1}{4\pi\sqrt{xyz}}2\sqrt{x}\cdot 2\pi \cdot 8\sqrt{2}C_{\Phi}\sqrt{yz}=8\sqrt{2}C_{\Phi}.
\eeas 
	This completes the proof of the lemma.
\end{proof}

\begin{center}\section{Proof of 
Theorem \ref{theorem1.2}}\end{center}

\par
Part(I): Let $F_t\in {\cal B}_{1}^{+}({\mR}_{\ge 0})$ be a conservative measure-valued isotropic solution of Eq.(\ref{Equation1}) on $[0,\infty)$ with initial datum $F_0$ satisfying $N(F_0)=N, E(F_0)=E$
obtained by Theorem\ref{theorem2.2} with $\fr{1}{20}<\ld<\fr{1}{19}$. 
Then
\be \|F_{\tau}-F_{\rm be}\|_{1}^{\circ}\le
C_1(1+t)^{-\ld/2}\qquad \forall\, t\ge 0.\lb{3.0}\ee
Here and below the constants $C, C_i\in(0,\infty), i=1,2,...,7$ depend only on $N, E, b_0,\eta$ and $\ld$. According to Lemma \ref{lemma2.1} and (\ref{3.0}), to prove part (I) of Theorem \ref{theorem1.2},
it needs only to prove that
\be \big|F_t(\{0\})-F_{\rm be}(\{0\})\big|\le  C(1+t)^{-\fr{(1-\eta)\ld}{2(4-\eta)}}\qquad \forall\, t\ge 0\lb{3.1}\ee
Using the inequality $0\le 1-[(1-\fr{x}{\vep})_{+}]^p\le \fr{p}{\vep}x\,(x\ge 0, \vep>0, 1<p<\infty)$ and 
using the conservation of mass we have with $p=\fr{3}{2}$ that
\beas&& 
\big|N_{0,3/2}(F_t,\vep)-N_{0,3/2}(F_{\rm be},\vep)\big|
\le \int_{{\mR}_{\ge 0}}\Big(1-\big[(1-\fr{x}{\vep})_{+}\big]^{3/2}\Big)
{\rm d}|F_t-F_{\rm be}|(x)\\
&&
\le \fr{3}{2\vep} \|F_{\tau}-F_{\rm be}\|_{1}^{\circ}
\le \fr{3}{2\vep}C_1(1+t)^{-\ld/2},\qquad t\ge 0.\eeas
Since
\beas&& N_{0,3/2}(F_{\rm be},\vep)\ge F_{\rm be}(\{0\}),\\
&&N_{0,3/2}(F_{\rm be},\vep)=F_{\rm be}(\{0\})+\int_{0}^{\vep}\big(1-\fr{x}{\vep}\big)^{\fr{3}{2}}
\fr{\sqrt{x}}{e^{\kappa x}-1}{\rm d}x\le F_{\rm be}(\{0\})+C_2 \vep^{1/2}
\eeas
it follows that
\beas&& N_{0,3/2}(F_t,\vep)\ge F_{\rm be}(\{0\})-\fr{3}{2\vep}C_1(1+t)^{-\ld/2},\\
&& F_t(\{0\})\le N_{0,3/2}(F_t,\vep)\le F_{\rm be}(\{0\})+C_2 \vep^{1/2}+\fr{3}{2\vep}C_1(1+t)^{-\ld/2}.\eeas
Taking $\vep=(1+t)^{-\ld/3}$ gives
\be F_t(\{0\})\le F_{\rm be}(\{0\})+C_3(1+t)^{-\ld/6},\qquad \forall\, t\ge 0.\lb{Uper}\ee
Let $h_0=\fr{\log 2}{2\sqrt{NE}}$. 
Then for every  $t\ge 1+h_0$,  taking 
$\vep=(1+t)^{-\fr{3\ld}{2(4-\eta)}}$ and $h=\vep^{\fr{1-\eta}{3}}$ and using (\ref{3.16}) in Lemma \ref{lemma3.3} 
and the above inequalities we 
obtain
\beas&&
F_{t}(\{0\})\ge e^{-2ch}N_{0,3/2}(F_{t-h},\vep)
-\Big(\fr{2 e^{-ch}N}{h b_0F_{t-h}(\{0\})}\Big)^{1/2}
\Big(\fr{3}{1-\eta}\Big)^{3/2}\vep^{\fr{1-\eta}{2}}\\
&&\ge (1-2ch)N_{0,3/2}(F_{t-h},\vep)
-\Big(\fr{16}{h b_0}\Big)^{1/2}
\Big(\fr{3}{1-\eta}\Big)^{3/2}\vep^{\fr{1-\eta}{2}}\\
&&\ge N_{0,\fr{3}{2}}(F_{t-h},\vep)-2ch N
-\Big(\fr{16}{b_0}\Big)^{1/2}
\Big(\fr{3}{1-\eta}\Big)^{3/2}\vep^{\fr{1-\eta}{2}} h^{-1/2}\\
&&\ge F_{\rm be}(\{0\})-\fr{3}{2\vep}C_1(1+t-h)^{-\ld/2}
-2c\vep^{\fr{1-\eta}{3}}N
-\Big(\fr{16}{b_0}\Big)^{1/2}
\Big(\fr{3}{1-\eta}\Big)^{3/2}\vep^{\fr{1-\eta}{3}}\\
&&\ge F_{\rm be}(\{0\})-\fr{C_4}{\vep}(1+t)^{-\ld/2}
-C_5\vep^{\fr{1-\eta}{3}}\ge F_{\rm be}(\{0\})-C_6(1+t)^{-\fr{(1-\eta)\ld}{2(4-\eta)}}.
\eeas
From this we obtain
$$ F_{t}(\{0\})\ge F_{\rm be}(\{0\})-C_7(1+t)^{-\fr{(1-\eta)\ld}{2(4-\eta)}}\qquad \forall\, t\ge 0.$$
This together with (\ref{Uper})
leads to (\ref{3.1}).
\vskip2mm

Part(II): Let $\varphi_{\vep}(x)=[(1-x/\vep)_+]^2$. 
By $W(x,y,z)\le W_H(x,y,z)$ (because $\Phi\le 1$) we have
\beas{\cal J}[\vp_{\vep}](y,z) &\le & \frac{1}{2}
\int_{0}^{y+z}W(x,y,z)(\vp_{\vep}(x)+\vp_{\vep}(y+z-x))
\sqrt{x}{\rm d}x\\
&\le &  \fr{1}{2}\int_{0}^{y+z}W_H(x,y,z)(\vp_{\vep}(x)+\vp_{\vep}(y+z-x))
\sqrt{x}{\rm d}x\\
&=&\int_{0}^{y+z}W_H(x,y,z)\vp_{\vep}(x)
\sqrt{x}{\rm d}x.\eeas
Combining this with the fact that $W_H(x,y,z)\sqrt{x}\le \sqrt{2}/\sqrt{y+z}$ for all $0<x<y+z$ and $\sup\limits_{r>0}\fr{1}{\sqrt{r}}\int_0^{r} \vp_{\vep}(x){\rm d}x\le \sqrt{\vep}$ we obtain
\beas\int_{\mR^2 \ge 0}{\cal J}[\vp_{\vep}]{\rm d}^2F_\tau&
\le&\int_{y,z\ge 0,y+z>0}\fr{\sqrt{2} }{\sqrt{y+z}}\int_0^{y+z} \vp_{\vep}(x){\rm d}x {\rm d}^2F_\tau\le\sqrt{2}\sqrt{\vep}N^2 .\eeas
For the cubic integral $\int_{{\mathbb R}_{\ge 0}^3}{\cal K}[\vp_{\vep}]{\rm d}^3F_{\tau}$, using part (I) of
Lemma \ref{lemma3.2*} we have
\bes \int_{{\mR}_{\ge 0}^3}{\cal K}[\vp_{\vep}]{\rm d}^3F_{\tau}\nonumber
&=&\int_{0< x<y\le z}\chi_{y,z}W(x,y,z)\Dt_{\rm sym}\vp_{\vep}(x,y,z){\rm d}^3F_{\tau}\nonumber\\
&+&
2\int_{0< x<y<z}\big(W(y,x,z)-W(x,y,z)\big)\Dt\vp_{\vep}(y,x,z){\rm d}^3F_{\tau}\nonumber
\\&+&\int_{0<y, z<x<y+z} W(x,y,z)
\Dt\vp_{\vep}(x,y,z){\rm d}^3F_{\tau}\nonumber \\
&+&F_{\tau}(\{0\})\int_{0<y\le z}\chi_{y,z}W(0,y,z)\Dt_{\rm sym}\vp_{\vep}(0,y,z){\rm d}^2F_{\tau}\nonumber\\
&+&2F_{\tau}(\{0\})\int_{0<y<z}\big(W(y,0,z)-W(0,y,z)\big)\Dt\vp_{\vep}(y,0,z){\rm d}^2F_{\tau}\nonumber\\
&:=& I_{1,\vep}(\tau)+I_{2,\vep}(\tau)+I_{3,\vep}(\tau)+I_{4,\vep}(\tau)+I_{5,\vep}(\tau). \label{dp}\ees
By Definition \ref{definition1.1} of measure-valued solutions we then obtain
\bes \int_{{\mathbb R}_{\ge 0}}\varphi_{\vep}{\rm d}F_t
&=&
\int_{{\mathbb R}_{\ge 0}}\varphi_{\vep}{\rm d}F_0
+\int_0^t {\rm d}\tau\int_{{\mathbb R}_{\ge 0}^2}{\cal J}[\vp_{\vep}]{\rm d}^2F_{\tau}+\int_0^t{\rm d}\tau \int_{{\mathbb R}_{\ge 0}^3}{\cal K}[\varphi_{\vep}]{\rm d}^3F_{\tau}\nonumber \\
&\le & \int_{{\mathbb R}_{\ge 0}}\varphi_{\vep}{\rm d}F_0+\sqrt{2}N^2\sqrt{\vep}t+
\int_0^t \sum_{j=1}^5I_{j,\vep}(\tau){\rm d}\tau,\quad t\ge 0.\lb{3.19}\ees
We need to prove that
\bes&& \limsup_{\vep \to 0^+}\int_0^t \big(I_{1,\vep}(\tau)+I_{2,\vep}(\tau)+I_{3,\vep}(\tau)\big){\rm d}\tau \le 0, \lb{K0}\\
&&\limsup_{\vep \to 0^+}\int_0^t \big(I_{4,\vep}(\tau)+I_{5,\vep}(\tau)\big){\rm d}\tau \le
a\int_0^t F_{\tau}(\{0\}){\rm d}\tau \dnumber \lb{II}\ees
where $a =48\sqrt{2} N^2.$
First from $\lim\limits_{\vep\to 0^+}\vp_{\vep}(x)=0$ for all $x>0$ it is easy to see that
\bes&&\lim_{\vep \to 0^+}W(x,y,z)\Dt_{\rm sym}\vp_{\vep}(x,y,z)=0 \quad \forall\,0<x<y\le z, \lb{W1*}\\
&&\lim_{\vep \to 0^+}(W(y,x,z)-W(x,y,z)\big)\Dt\vp_{\vep}(y,x,z)=0 \quad \forall\, 0<x<y\le z, \dnumber \lb{W2*}\\
&&\lim_{\vep \to 0^+}W(x,y,z)\Dt\vp_{\vep}(x,y,z)=0 \quad \forall\, 0<y,z< x<y+z. \dnumber \lb{W3*}
\ees
In order to use dominated convergence theorem to prove (\ref{K0}) and (\ref{II}) , it needs to prove
\bes &&0\le W(x,y,z)\Dt_{\rm sym}\vp_{\vep}(x,y,z) \le  16\sqrt{2} \quad \forall\,0\le x<y\le z,\lb{E1}\\
&&\max\big\{0, (W(y,x,z)-W(x,y,z)\big)\Dt\vp_{\vep}(y,x,z)\big\} \le 8\sqrt{2}C_{\Phi} \nonumber \\
&&\forall\,0\le x<y\le z,\dnumber \lb{E2} \\
&&0\le W(x,y,z)\Dt\vp_{\vep}(x,y,z)\le 16\sqrt{2} \quad \forall\, 0<y,z< x<y+z, \dnumber \lb{E3} \ees
(note that the case $x=0$ is included in (\ref{E1}), (\ref{E2})).
To do this we first use (\ref{W01}) and the definition of $W_H$ to get
\bes&& W(x,y,z)
\le \fr{\min\{1, (2^4z)^\eta\}}{\sqrt{yz}}\qquad \forall\, 0\le x<y\le z, \lb{Wx}\\
&&
\min\{1,(2^4z)^{\eta}\}\le 2^4z\qquad \forall\, z\ge 0\quad ({\rm because}\,\,\eta\ge 1 ).\dnumber \lb{W3}\ees
To prove (\ref{E1}), we need the inequality
$-\vp_{\vep}'(x)=\fr{2}{\vep}(1-\fr{x}{\vep})_{+}\le \fr{1}{2x}$ for all $x>0$.
Suppose first that $0\le x<y<z$ or $0<x<y\le z$. Then, by convexity of $\vp_{\vep}$,
$$0\le \Dt_{\rm sym}\vp_{\vep}(x,y,z)\le \vp_{\vep}(z+x-y)-\vp_{\vep}(z) \le -\vp_{\vep}'(z+x-y)(y-x)\le 
\fr{y-x}{2(z+x-y)}$$
and so using (\ref{Wx}),(\ref{W3}) we see that if $y\le z/2$ then
$$0\le W(x,y,z)\Dt_{\rm sym}\vp_{\vep}(x,y,z)\le \fr{2^4z}{\sqrt{yz}}\cdot\fr{y-x}{2(z-y+x)}\le
2^{4}$$ and if $y> z/2 $, we also have
$$0\le W(x,y,z)\Dt_{\rm sym}\vp_{\vep}(x,y,z)\le  W(x,y,z)\le \fr{\min\{1,(2^4z)^\eta\}}{\sqrt{yz}}\le
2^4\sqrt{2}. $$
Next suppose $x=0<y=z$.  Then using (\ref{Wx}),(\ref{W3}) again we have
$$0\le W(0,z,z)\Dt_{\rm sym}\vp_{\vep}(0,z,z)\le  W(0,z,z)
\le \fr{\min\{1, (2^4z)^\eta\}}{z}\le 2^4.$$
This proves (\ref{E1}).

To prove (\ref{E2}), let $0\le x<y\le z$.
	From (\ref{diff}) one sees that  $-1\le \Dt\vp_{\vep}(y,x,z)\le 0$, and so if $W(y,x,z)- W(x,y,z)\ge 0$, then  $\big(W(y,x,z)-W(x,y,z)\big)\Dt\vp_{\vep}(y,x,z)\le 0$. 
Suppose $W(y,x,z)-W(x,y,z)\le 0$. Then using  (\ref{W05}), (\ref{Wx}), and
(\ref{W3})  we get
\beas&& 0\le \big(W(y,x,z)-W(x,y,z)\big)\Dt\vp_{\vep}(y,x,z)\\
&&=
\big(W(x,y,z)-W(y,x,z)\big)(-\Dt\vp_{\vep}(y,x,z))
\\
&&\le W(x,y,z)-W(y,x,z)\le 8\sqrt{2}C_{\Phi}.\eeas
The inequality (\ref{E3}) is obvious: we have for all $0<y,z< x<y+z$ (using (\ref{W3}) again)
$$0\le W(x,y,z)\Dt\vp_{\vep}(x,y,z)\le W(x,y,z) \le
\fr{\min\{1, (2^4x)^\eta\}}{\sqrt{x}\sqrt{y\vee z}}\le 2^{4+1/2}.$$
We have proved (\ref{E1}),(\ref{E2}),(\ref{E3}), and thus (\ref{K0}),(\ref{II}) hold true.
From (\ref{dp}), (\ref{K0}), (\ref{II}) we then obtain 
$$\limsup_{\vep\to 0+}\int_{0}^t\sum_{j=1}^5 I_{j,\vep}(\tau) {\rm d}\tau 
\le a \int_{0}^tF_{\tau}(\{0\}) {\rm d}\tau,\quad t\in [0,\infty).$$
This together with $\lim\limits_{\vep\to 0+}\int_{{\mR}_{\ge 0}}\vp_{\vep}{\rm d}F_t 
=F_t(\{0\})\,(\forall\, t\ge 0$) and
(\ref{3.19}) gives
$$F_t(\{0\})\le  F_0(\{0\})+a\int_0^t F_{\tau}(\{0\}) {\rm d}\tau\qquad \forall\, t\ge 0$$
and we conclude (\ref{noBEC}) by Gronwall's lemma. 
Finally if $F_0(\{0\})=0$ and $\overline{T}/\overline{T}_c<1$, then 
$F_t(\{0\})=0$ for all $t\ge 0$ and 
$\|F_t-F_{\rm be}\|\ge |F_t-F_{\rm be}|(\{0\})=
|F_t(\{0\})-F_{\rm be}(\{0\})|=F_{\rm be}(\{0\})>0$ for  all $t\ge 0$.
$\hfill\Box$	
\\

{\bf Acknowledgment}.  C.S. is partially supported by the National Key R$\&$D
Program of China, Project Number 2021YFA1002800.
\\


\begin{thebibliography}{99}

\bibitem{AN1} Arkeryd, L.; Nouri, A.: Bose condensates in interaction with excitations: a kinetic model. 
Comm. Math. Phys. {\bf 310}, no. 3, 765–788  (2012).
 
\bibitem{AN2} Arkeryd, L.; Nouri, A.: Bose condensates in interaction with excitations: a two-component space-dependent model close to equilibrium. J. Stat. Phys. {\bf 160} , no. 1, 209–238  (2015).
    
\bibitem{weak-coupling} Benedetto, D., Pulvirenti, M., Castella, F., Esposito, R.: On
the weak-coupling limit for bosons and fermions. Math. Models Methods
Appl. Sci. {\bf 15}, 1811-1843  (2005).

\bibitem{Briant-Einav} Briant, M., Einav, A.: On the Cauchy problem for the
homogeneous Boltzmann-Nordheim equation for bosons: local existence, uniqueness
and creation of moments. J. Stat. Phys.  {\bf 163}, 1108-1156 (2016).

\bibitem{CL} Cai, S., Lu, X.: The spatially homogeneous Boltzmann equation for Bose-Einstein particles: rate of strong convergence to equilibrium. J. Stat. Phys. {\bf 175} , no. 2, 289-350 (2019).

\bibitem{CCL} Carlen, E.A., Carvalho, M.C., Lu, X.: On strong convergence to equilibrium for the Boltzmann
    equation with soft potentials. J. Stat. Phys. {\bf 135}, 681-736 (2009).

\bibitem {Chapman and Cowling} Chapman, S., Cowling, T.G.: {\it The Mathematical
Theory of Non-Uniform Gases}. Third Edition (Cambridge University
Press, 1970).


\bibitem{Do} Dolbeault,J.: Kinetic models and quantum effects: a modified Boltzmann equation for Fermi-Dirac particles. Arch. Rational Mech. Anal. {\bf 127}, no. 2, 101-131 (1994).


\bibitem{ESY} Erd\"{o}s, L., Salmhofer, M., Yau, H.-T.: On the
quantum Boltzmann equation. J. Stat. Phys. {\bf 116}, 367-380  (2004).

\bibitem{EMV0} Escobedo, M., Mischler, S., Valle, M.A.:
Homogeneous Boltzmann equation in quantum relativistic kinetic theory.
Electronic Journal of Differential Equations, Monograph, 4.
Southwest Texas State University, San Marcos, TX, 2003. 85 pp.

\bibitem {EV1} Escobedo, M., Vel\'{a}zquez, J.J.L.:
On the blow up and condensation of supercritical
solutions of the Nordheim equation for bosons.   Comm. Math. Phys.
{\bf 330}, 331-365 (2014).

\bibitem {EV2} Escobedo, M., Vel\'{a}zquez, J.J.L.: Finite time blow-up and
condensation for the bosonic Nordheim equation.
Invent. Math. {\bf 200}, 761-847  (2015).

\bibitem{LiLu} Li, W., Lu, X.: Global existence of solutions of the Boltzmann equation
for Bose-Einstein particles with anisotropic initial data.
 J. Funct. Anal. {\bf 276},  231-283 (2019).

\bibitem{Lions} Lions, P.-L.: Compactness in Boltzmann's equation via Fourier integral operators and applications. III. J. Math. Kyoto Univ. 34, no. 3, 539-584 (1994).

 \bibitem{Lu2000}Lu, X.: A modified Boltzmann equation for Bose-Einstein particles: isotropic solutions and long-time behavior. J. Statist. Phys. {\bf 98}, no. 5-6, 1335–1394  (2000).

\bibitem{Lu2004} Lu, X.: On isotropic distributional solutions to the Boltzmann
equation for Bose-Einstein particles. J. Statist. Phys. {\bf
116}, 1597-1649  (2004).

\bibitem{Lu2005} Lu, X.: The Boltzmann equation for Bose-Einstein particles: velocity concentration and
convergence to equilibrium. J. Statist. Phys. {\bf 119}, 1027-1067  (2005).

\bibitem{Lu2013} Lu, X.: The Boltzmann equation for Bose-Einstein particles:
condensation in finite time.  J. Stat. Phys. {\bf 150}, 1138-1176  (2013).

\bibitem{Lu2016} Lu, X.: Long time convergence of the Bose-Einstein condensation.
J. Stat. Phys. {\bf 162}, 652-670 (2016).


\bibitem{LM} Lu, X., Mouhot, C.: 
On measure solutions of the Boltzmann equation, Part II: Rate of convergence to equilibrium. J. Differential Equations {\bf 258}, no. 11, 3742–3810 (2015). 


\bibitem{LS} Lukkarinen, J., Spohn, H.:
Not to normal order--notes on the kinetic limit for weakly interacting quantum fluids.
 J. Stat. Phys. {\bf 134}, 1133-1172  (2009).

\bibitem {Nordheim} Nordheim, L.W.: On the kinetic methods in the new statistics
and its applications in the electron theory of conductivity.
Proc. Roy. Soc. London Ser. A  {\bf 119}, 689-698  (1928).

\bibitem {Nouri} Nouri, A.:  Bose-Einstein condensates at very low temperatures:
a mathematical result in the isotropic case.
Bull. Inst. Math. Acad. Sin. (N.S.) {\bf 2}, 649-666  (2007).


\bibitem {OW} Ouyang, Z.; Wu, L.: On the quantum Boltzmann equation near Maxwellian and vacuum. J. Differential Equations {\bf 316}, 471-551  (2022).

\bibitem {Rudin} Rudin, W: {\it Real and Complex Analysis}. Third edition. McGraw-Hill Book Co.,
New York, 1987. xiv+416 pp. ISBN: 0-07-054234-1 00A05 (26-01 30-01 46-01).

\bibitem {SH} Spohn, H.: Kinetics of the Bose-Einstein condensation.
Physica D {\bf 239}, 627-634  (2010).

\bibitem {Uehling and Uhlenbeck}Uehling, E.A., Uhlenbeck, G.E.:
Transport phenomena in Einstein-Bose and Fermi-Dirac gases, I, Phys. Rev.
{\bf 43}, 552-561 (1933).

\bibitem{Zhou} Zhou, Y.-L.: Global well-posedness of the quantum Boltzmann equation for bosons interacting via inverse power law potentials. Adv. Math. {\bf 430} (2023), Paper No. 109234, 118 pp.

\end{thebibliography}
\end{document}